\newcommand{\helv}{\fontfamily{phv}\fontseries{b}\fontsize{9}{11}\selectfont}
\newtheorem{theorem}{Theorem}[section]
\newtheorem{lemma}[theorem]{Lemma}
\newtheorem{definition}[theorem]{Definition}
\newtheorem{remark}[theorem]{Remark}
\numberwithin{equation}{section}
\def\div{\mathop{\rm div}}
\newcommand{\om}{\Omega}
\newcommand{\pom}{\partial\Omega}
\newcommand{\epal}{{\mathcal E}(\overline{\om})}
\newcommand{\dom}{{\rm d}\Omega}
\newcommand{\ds}{\mbox{ }ds}
\newcommand{\dt}{\mbox{ }dt}
\newcommand{\dc}{\|}
\newcommand{\dz}{\bigl(}
\newcommand{\bdz}{\bigl(\!\bigl(}
\newcommand{\intnt}{\int_0^T}
\newcommand{\iom}{\int_\om}
\newcommand{\nt}{(0,T)}
\newcommand{\ogpu}{{{\mathcal G}_{\mathcal P}}_{\bfu}}
\newcommand{\omnt}{\om\times\nt}
\newcommand{\on}{{\cal N}}
\newcommand{\sh}{\bigr)}
\newcommand{\svajd}{\bigr)\!\bigr)}
\newcommand{\sumkjinf}{\sum_{k=1}^{\infty}}
\newcommand{\os}{{\cal S}}
\newcommand{\tnt}{t\in(0,T)}
\newcommand{\unu}{{\bfu}_0}
\newcommand{\bfb}{\mbox{\boldmath{$b$}}}
\newcommand{\bfx}{\mbox{\boldmath{$x$}}}
\newcommand{\bfn}{\mbox{\boldmath{$n$}}}
\newcommand{\bff}{\mbox{\boldmath{$f$}}}
\newcommand{\bfg}{\mbox{\boldmath{$g$}}}
\newcommand{\bfG}{\mbox{\boldmath{$G$}}}
\newcommand{\bfu}{\mbox{\boldmath{$u$}}}
\newcommand{\bfw}{\mbox{\boldmath{$w$}}}
\newcommand{\bfv}{\mbox{\boldmath{$v$}}}
\newcommand{\bfU}{\mbox{\boldmath{$U$}}}
\newcommand{\bfphi}{\mbox{\boldmath{$\phi$}}}
\newcommand{\bfPhi}{\mbox{\boldmath{$\Phi$}}}
\newcommand{\bfpsi}{\mbox{\boldmath{$\psi$}}}
\newcommand{\bftheta}{\mbox{\boldmath{$\theta$}}}
\newcommand{\bfvartheta}{\mbox{\boldmath{$\vartheta$}}}
\newcommand{\bfPsi}{\mbox{\boldmath{$\Psi$}}}
\newcommand{\bfsigma}{\mbox{\boldmath{$\sigma$}}}
\newcommand{\bfvarphi}{\mbox{\boldmath{$\varphi$}}}
\newcommand{\bfomega}{\mbox{\boldmath{$\omega$}}}
\newcommand{\bfzero}{{\bf 0}}
\newcounter{constants}
\newcommand{\cn}[2]{ \addtocounter{constants}{1} \newcounter{c#1#2}
\setcounter{c#1#2}{\value{constants}} c_{\arabic{c#1#2}} }
\newcommand{\cc}[2]{c_{\arabic{c#1#2}}}
\title{\fontfamily{phv}\selectfont{\LARGE{\bfseries{Solutions
to the Navier--Stokes Equations with Mixed Boundary Conditions in
Two-Dimensional Bounded Domains}}}}
\date{}
\author{\fontfamily{phv}\selectfont{\large{\mdseries{Michal Bene\v s\footnote{Department of Mathematics, Faculty of Civil Engineering, Czech Technical University in Prague, Th\'{a}kurova 7, 166 29 Prague 6, Czech Republic, E-mail: benes@mat.fsv.cvut.cz}
\; and
Petr Ku\v{c}era\footnote{Department of Mathematics, Faculty of Civil Engineering, Czech Technical University in Prague, Th\'{a}kurova 7, 166 29 Prague 6, Czech Republic, E-mail: kucera@mat.fsv.cvut.cz}}}}}
\begin{document}
\maketitle

\noindent\textbf{Abstract:}   In this paper we consider the system of the
non--steady Navier--Stokes equations with mixed boundary conditions.
We study the existence and uniqueness of a solution of this system.
We define Banach spaces $X$ and $Y$, respectively, to be the space
of ``possible'' solutions of this problem and the space of its data.
We define the operator $\on:X\to Y$ and formulate our problem in
terms of operator equations. Let $\bfu\in X$ and $\ogpu: X\to Y$ be
the Frechet derivative of $\on$ at $\bfu$. We prove that $\ogpu$ is
one-to-one and onto $Y$. Consequently, suppose that the system is
solvable with some given data (the initial velocity and the right
 hand side). Then there
exists a unique solution of this system for data which are small
perturbations of the previous ones.
Next result proved in the Appendix of this paper
is $W^{2,2}$- regularity of solutions of steady Stokes system with mixed
boundary condition for sufficiently smooth data.

\medskip
\noindent\textbf{Keywords:} {Navier--Stokes equations; Mixed boundary conditions; Qualitative properties}

\medskip
\noindent\textbf{Mathematics Subject Classification (2010):} {35Q30; 35D05}

\newenvironment{proof}[1][Proof.]{\begin{trivlist}
\item[\hskip \labelsep {\bfseries #1}]}{\end{trivlist}}

\newcommand{\newtext}[1]{{\color{blue} #1}}

\section{Introduction}

The Navier–Stokes equations have been usually solved with the Dirichlet boundary
condition. This theory is elaborated in many papers in which there were proved,
e.g., the results on the global in time existence of weak solutions, uniqueness
of weak solutions in an appropriate function space, global in time existence
of strong solutions for sufficiently small initial data and local in time existence
of strong solution for arbitrary data. However, the Dirichlet boundary condition
is not natural in some situations, e.g. in a finite channel flow model.
The Dirichlet boundary
condition can be used on the fixed wall and on the input of the channel, but it
cannot be prescribed on the output. The reason is the output velocity dependence
on the flow in the channel which is not known in advance. Some authors, dealing mostly with numerical methods,
use either the condition
\begin{equation}\label{eq1}
\nu{{\partial \bfu}\over{\partial\bfn}} - {\mathcal P}\bfn\, =\, \bfzero.
\end{equation}
or
\begin{eqnarray}\label{eq1a}
- {\mathcal P} \bfn +  {\nu\over 2}\,(\nabla\bfu + \nabla\bfu^T)\cdot \bfn &=& {\bf0}
\end{eqnarray}
on the output of the boundary (see e.g. \cite{Glo} or \cite{Ran}). Another possibility is to introduce
mixed boundary conditions by prescribing the homogeneous Dirichlet conditions on the fixed wall and boundary
conditions (\ref{eq1}) or (\ref{eq1a}) on the input and the output of the channel. The latter conditions do not exclude the
possibility of backward flows which could eventually bring an uncontrollable amount of kinetic energy back
to the channel. Consequently the energy inequality known from the Navier-Stokes equations with the Dirichlet
boundary condition or another equivalent a priori estimate of a weak solution cannot be derived for the Navier-Stokes
equations problem with the latter boundary conditions.  Due to
this fact, the question of the global in time existence of a weak
solution of this problem is still open.

Some qualitative properties of the Navier--Stokes equations with these
boundary conditions are studied in
\cite{KraNeu1,KraNeu2,KraNeu5,KuSkpj}. In
\cite{KraNeu1}--\cite{KraNeu5}, Kra\v cmar \& Neustupa prescribed an
additional condition on the output (which bounds the kinetic energy
of an eventual backward flow) and formulated steady and evolutionary
Navier--Stokes problems by means of appropriate variational
inequalities. In \cite{KuSkpj}, Ku\v cera \& Skal\' ak proved the
local--in--time existence of a strong solution of the nonsteady
Navier--Stokes problem with boundary condition (\ref{eq5}) on the
part of the boundary. In this paper, we study the same problem and
we prove the global--in--time existence and uniqueness of a strong
solution in a small neighbourhood of another known solution.

Let $\Omega$ be a bounded domain in $\mathbb{R}^2$, $\Omega \in
C^{0,1}$  and let $\Gamma_D$, $\Gamma_N$ be open disjoint subsets of
$\pom$ (not necessarily connected) such that $\Gamma_D\neq\emptyset$
and the $\partial\om\smallsetminus(\Gamma_D\cup\Gamma_N)$ is a
finite set. The domain $\om$ represents a channel system filled up
by a moving fluid, $\Gamma_D$ is a fixed wall and $\Gamma_N$
represents the input and output (free-stream surfaces) of the
channel. It is assumed that in/outflow pipe segments extend as
straight pipes. All portions of $\Gamma_N$ are taken to be flat and
the boundary $\Gamma_N$ and rigid boundary $\Gamma_D$ form a right
angle at each point $A\in
\partial\om\smallsetminus(\Gamma_D\cup\Gamma_N)$ (i.e., at the point
in which the boundary conditions change their type) (cf. Fig.
\ref{channel}). Moreover, we assume that all parts of $\Gamma_D$ are
smooth (of class $C^{\infty}$). Let $T\in(0,\infty)$, $T$ is
supposed to be fixed value throughout the paper.

The classical formulation of the problem we are going to study is as
follows:
\begin{eqnarray}
{{\partial \bfu}\over{\partial t}} - \nu\Delta \bfu +
(\bfu\cdot\nabla)\bfu + \nabla \mathcal{P} &=& \bff \quad \mbox{in}\
\omnt, \label{eq2} \\ [2pt] \div\bfu
&=& 0\quad \, \mbox{in}\ \omnt, \label{eq3} \\
[4pt]  \bfu &=& {\bf 0} \quad \, \mbox{on}\ \Gamma_D\times\nt,
\label{eq4} \\
[4pt] - \mathcal{P} \bfn + \nu{{\partial \bfu}\over{\partial{\bfn}}}
&=& {\bf0} \quad \,
\mbox{on}\ \Gamma_N\times\nt, \label{eq5} \\
[4pt] \bfu(0) &=& \bfu_0 \;\; \mbox{ in}\ \om. \label{eq6}
\end{eqnarray}
Functions $\bfu$, $\mathcal{P}$, $\bff$, $\bfu_0$ are smooth enough,
$\bfu = (u_1,u_2)$ is velocity, $\mathcal{P}$ represents pressure,
$\nu$ denotes the viscosity, $\bff$ is a body force and $\bfn=
(n_1,n_2)$ is an outer normal vector. $\bfu_0$ describes an initial
velocity and the compatibility condition $\bfu_0=\bf0$ on $\Gamma_D$
holds. The problem (\ref{eq2})--(\ref{eq6}) is called the nonsteady
Navier--Stokes problem with the mixed boundary conditions. For
simplicity we suppose that $\nu=1$ throughout the paper.

We also comment on the problem, in which (\ref{eq3})--(\ref{eq6})
hold and (\ref{eq2}) is replaced with the equation
\begin{equation}\label{eq7}
{{\partial \bfu}\over{\partial t}} - \Delta \bfu + \nabla
\mathcal{P} = \bff    \quad \mbox{in } \omnt .
\end{equation}
The problem (\ref{eq3})--(\ref{eq6}) and (\ref{eq7}) is called the
nonsteady Stokes problem with the mixed boundary conditions.

\begin{figure}[h]
\centering
\includegraphics[width=7.0cm]{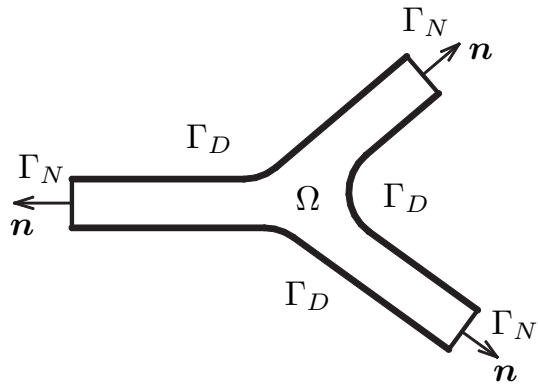}\qquad
  \caption{The domain $\om$
represents a channel filled up by a moving fluid.}\label{channel}
\end{figure}

Let us present an outline of the paper. We start with the
definitions of some function spaces and the spaces of solutions (the
space $X$) and data (the space $Y$) of the problem in Section 2. In
Section 3 we present some auxiliary results of Stokes and
Navier--Stokes problem. We set the problem in the form of an
operator equation. In section 4 we present the main result of the
paper based on the well known Local Diffeomorphism Theorem, i.e. the
local existence and uniqueness result for the related Navier--Stokes
equations with the mixed boundary conditions.  In Appendix A we
prove the regularity of the solution to the steady Stokes problem
with the mixed boundary conditions. We use ideas from
Kozlov~\emph{et al.}~\cite{KozMazRoss}.

We shall denote by $c$ a generic constant, i.e.~a constant whose
value may change from one line to the next one. Numbered constants
$\cn01,\cn02,\dots$ will have fixed values throughout the paper.

\section{Definition of some function spaces}
Let
\begin{displaymath}
{\mathcal{E}}(\overline{\Omega}):=\left\{\bfu\in{\mathcal
C}^\infty(\overline{\Omega})^2; \, \textmd{div}\,\bfu = 0, \,
\textmd{supp}\, \bfu  \cap \overline{\Gamma_D} = \emptyset \right\}.
\end{displaymath}
Let the linear space $V_{\kappa}$ and $L^{2}_{\kappa}$, respectively, be closures of $\epal$ in
the norm of $W^{1,2}(\om)^2$ and $L^2(\Omega)^2$.
Note, that $V_{\kappa}$ and $L^{2}_{\kappa}$ are
closed subspaces of $W^{1,2}(\om)^2$ and $L^2(\om)^2$. The scalar
product in $L^{2}_{\kappa}$ is the same as in $L^2(\Omega)^2$ and we
denote it by $\dz.\,,\,.\sh$. In $V_{\kappa}$, we use the scalar
product
\begin{displaymath}
\bdz\bfPhi,\bfPsi\svajd\ :=\ \iom\nabla\bfPhi\cdot\nabla\bfPsi\;
\dom,
\end{displaymath}
which is equivalent to the scalar product in $W^{1,2}(\Omega)^2$.
Most of papers solving Navier-Stokes equations with homogeneous Dirichlet boundary conditions define
the Hilbert spaces $V$ and $H$. Sometimes these spaces are denoted also by
$W^{1,2}_{0,\sigma}$ and $L^2_{\sigma}(\om)$. The Hilbert spaces $V_{\kappa}$ and $L^{2}_{\kappa}$
defined in this paper play corresponding role as V and H,respectively, but are not the same. To distinguish
them we use symbol $\kappa$.
\smallskip

Let
\begin{equation}\label{eq10b}
\mathcal{D} := \{\bfw\in V_{\kappa};\,\mbox{there exists }\bff\in
L^2_{\kappa} \mbox{ such that }\bdz\bfw,\bfv\svajd =
\dz\bff,\bfv\sh\mbox{ for every } \bfv \in V_{\kappa}\}.
\end{equation}
Let $\bff_i$ and $\bfw_i$, $i=1,2$, are corresponding functions via
(\ref{eq10b}). Denote the scalar product
$\bdz.,.\svajd_{\mathcal{D}}$ on $\mathcal{D}$ such that
\begin{equation}\label{scalar_D}
\bdz\bfw_1,\bfw_2\svajd_{\mathcal{D}} = \dz\bff_1,\bff_2\sh.
\end{equation}
Note that $\mathcal{D}$ is the Hilbert space with the scalar product
defined by \eqref{scalar_D}.

 Let $\bfw\in\mathcal{D}$ and $\bff$ is a corresponding function via (\ref{eq10b}).
It is proved in Appendix \ref{section_stationary} that there exists $q\in L^2(\om)$ such that couple
$(\bfw, q)$ is a solution of the steady Stokes system with mixed boundary conditions
(system (\ref{stationary S 2D})--(\ref{neumann S})).

Bilinear form $\bdz.,.\svajd$ is $V_{\kappa}$-eliptic since
all functions $\bfphi\in V_{\kappa}$ have zero traces on $\Gamma_D$, and
$\Gamma_D$ is nonempty open subset of $\partial\om$. Hence it can be shown as in
\cite[Chapter I., Paragraph 2.6]{Te} that there exist
functions $\bfphi_1,\bfphi_2,\ldots,\bfphi_k, \ldots \in V_{\kappa}
 \subset L^2_{\kappa}$ and real positive numbers $\lambda_1,\lambda_2,
 \ldots\lambda_k,\ldots\to\infty$ for $k\to\infty$, such that
\begin{displaymath}
\bdz\bfphi_k,\bfv\svajd = \lambda_k\dz\bfphi_k,\bfv\sh
\end{displaymath}
for every $\bfv \in V_{\kappa}$. $\bfphi_1,\bfphi_2,\dots$ is a
system which is complete in both $L^2_{\kappa}$ and $V_{\kappa}$,
orthonormal in $L^2_{\kappa}$ and orthogonal in $V_{\kappa}$. It is
easy to see that this system is orthogonal and complete in
$\mathcal{D}$, too. Further
\begin{equation}\label{eq11b}
L^2_{\kappa} = \Bigl\{\bfv;\mbox{ } \bfv = \sumkjinf
a_k\bfphi_k,\mbox{ } a_k\in  \mathbb{R} \mbox{ and }\sumkjinf
a_k^2<\infty\Bigr\},
\end{equation}
\begin{equation}\label{eq12b}
V_{\kappa} = \Bigl\{\bfv;\mbox{ }\bfv = \sumkjinf a_k\bfphi_k,\mbox{
} a_k\in \mathbb{R} \mbox{ and }\sumkjinf \lambda_k
a_k^2<\infty\Bigr\}
\end{equation}
and
\begin{equation}\label{eq13b}
\mathcal{D} = \Bigl\{\bfv;\mbox{ }\bfv = \sumkjinf
a_k\bfphi_k,\mbox{ } a_k\in \mathbb{R}\mbox{ and
}\sumkjinf\lambda_k^2 a_k^2<\infty\Bigr\}.
\end{equation}
Using (\ref{eq11b})--(\ref{eq13b}) we get
\begin{equation}\label{eq13c}
\mathcal{D}\hookrightarrow\hookrightarrow V_{\kappa}.
\end{equation}
In Appendix \ref{section_stationary} we prove the following
embedding
\begin{equation}\label{eq8}
\mathcal{D} \hookrightarrow  W^{2,2}(\om).
\end{equation}
Further we introduce the following Banach spaces
\begin{displaymath}
X := \left\{\bfvarphi;  \; \bfvarphi \in L^2(0,T;\,\mathcal{D}), \;
\bfvarphi' \in L^2(0,T;\,L^{2}_{\kappa})\right\}
\end{displaymath}
and
\begin{displaymath}
Y := \left\{ [\bfg;\,{\bfomega}]; \;  \bfg \in
L^2(0,T;\,L^{2}_{\kappa}), \; {\bfomega} \in V_{\kappa} \right\},
\end{displaymath}
respectively, equipped with the norms
\begin{displaymath}
\|\bfvarphi\|_X := \dc\bfvarphi\dc_{L^2(0,T;\,\mathcal{D})} +
\dc\bfvarphi'\dc_{L^2(0,T;\,L^{2}_{\kappa})}
\end{displaymath}
and
\begin{displaymath}
\|[\bfg;\,{\bfomega}]\|_Y := \|\bfg\|_{L^2(0,T,L^{2}_{\kappa})} +
\dc{\bfomega}\dc_{V_{\kappa}}.
\end{displaymath}

We denote zero elements of $X$ and $Y$
by $\bfzero_X$ and $\bfzero_Y$, respectively. Let us present some properties of the space $X$ which will be used
later. It is easy to see that $\bfvarphi\in X$ if and only if
\begin{eqnarray*}
\bfvarphi(t) = \sum_{k=1}^\infty\vartheta_k(t)\bfphi_k
\end{eqnarray*}
for almost every $\tnt$ and
\begin{eqnarray}\label{eq13ef}
\sum_{k=1}^\infty\int_0^T\bigl(\lambda_k^2\vartheta_k^2(t) + \vartheta^{'\,2}_k(t)\bigr)\dt<\infty.
\end{eqnarray}
Therefore
\begin{eqnarray*}
\frac{d}{\dt}\dc\bfvarphi(t)\dc^2_{V_{\kappa}} = 2\,\sum_{k=1}^\infty\lambda_k\vartheta_k(t)\vartheta'_k(t)
\end{eqnarray*}
and
\begin{eqnarray*}
\frac{d}{\dt}\dc\bfvarphi(t)\dc^2_{L^2_\kappa} = 2\,\sum_{k=1}^\infty\vartheta_k(t)\vartheta'_k(t)
\end{eqnarray*}
for almost every  $\tnt$. Using (\ref{eq13ef}) we obtain
\begin{equation}\label{eq13d}
\frac{d}{\dt}\dc\bfvarphi(t)\dc^2_{L^2_\kappa}\in L^1([0,T])
\end{equation}
and
\begin{equation}\label{eq13ef1}
\frac{d}{\dt}\dc\bfvarphi(t)\dc^2_{V_{\kappa}}\in L^1([0,T]).
\end{equation}
The fact that $\bfvarphi \in L^2(0,T;\,\mathcal{D})\hookrightarrow L^2(0,T;\,V_{\kappa})$ and (\ref{eq13ef1}) imply

\begin{equation}\label{eq13e}
X\hookrightarrow L^{\infty}(0,T;\,V_{\kappa})
\end{equation}
and
\begin{displaymath}
X \subset {\mathfrak C}(0,T;V_{\kappa}).
\end{displaymath}

 Using the embeddings \eqref{eq8} and \cite[Theorem 5.8.2]{KuJoFu} we obtain the embeddings
\begin{equation}\label{eq13ee}
\mathcal{D} \hookrightarrow
W^{2,2}(\om)\hookrightarrow\hookrightarrow
W^{1,6}(\om)^2\hookrightarrow\hookrightarrow L^2(\om)^2.
\end{equation}
(Note that by (\ref{eq8}) $\mathcal{D}\hookrightarrow\hookrightarrow
W^{1,p}(\om)^2$ for every $p>1$, but the embedding
$\mathcal{D}\hookrightarrow\hookrightarrow W^{1,6}(\om)^2$ is
sufficient for our aim now). By \cite[Chapter III, Theorem 2.1.]{Te}
and (\ref{eq13ee}) we get
\begin{equation*}
X\hookrightarrow\hookrightarrow L^2(0,T;\,W^{1,6}(\om)^2).
\end{equation*}
This embedding, (\ref{eq13e}), \cite[Theorem 5.8.2]{KuJoFu}  and the
interpolation between the spaces $V_{\kappa}$ and $W^{1,6}(\om)^2$
yield  the embeddings
\begin{equation}\label{eq13g}
X\hookrightarrow\hookrightarrow
L^4(0,T;\,W^{1,3}(\om)^2)\hookrightarrow L^4(0,T;\,L^q(\om)^2)
\end{equation}
for $1\le q<\infty$.

\section{The nonstationary Stokes and Navier--Stokes equations with the mixed boundary conditions}

Let us start this section with the definition of a generalized
solution to the linearized problem.
\begin{definition}
Let $\bff\in L^2(0,T;\,L^2_{\kappa})$ and $\unu\in V_{\kappa}$. Then
$\bfu$ is a generalized solution of the Stokes problem (\ref{eq3})--
(\ref{eq7}) with the right hand side $\bff$ and the initial
condition $\unu$ if and only if
\begin{equation}\label{eq10e}
\dz \bfu'(t),\bfv\sh +\bdz \bfu(t),\bfv\svajd = \dz \bff(t),\bfv\sh
\end{equation}
for every $\bfv \in V_{\kappa}$ and for almost every $\tnt$ and
\begin{equation}\label{eq10f}
\bfu(0) = \bfu_0.
\end{equation}
\end{definition}
\begin{definition}
The operator $\mathcal{S}: X\to Y$ is defined by
\begin{displaymath}
\os(\bfu) := \Bigl[\dz \bfu',\,.\sh + \bdz
\bfu,.\svajd;\,\bfu(0)\Bigr].
\end{displaymath}
\end{definition}

\begin{remark} Let $\bfu\in X$ and $[\bff;\,\unu]\in Y$.
Then $\bfu$ is the generalized solution of the Stokes problem
(\ref{eq3})-- (\ref{eq7}) with the right hand side $\bff$ and the
initial condition $\unu$ if and only if $\mathcal{S}(\bfu) =
[\bff;\,\unu]$.
\end{remark}

It is obvious that $\mathcal{S}$ is the continuous operator. In the
following theorem we prove that $\mathcal{S}$ is the one-to-one
operator and onto $Y$.

\begin{theorem}\label{stokes}
Let $\bff\in L^2(0,T;\,L^2_{\kappa})$, $\unu\in V_{\kappa}$. There
exists the unique generalized solution $\bfu \in X$ of the Stokes
problem with the mixed boundary conditions and with data $\bff$ and
$\unu$. Moreover, the following estimate holds
\begin{equation}\label{eq11a}
\dc\bfu\dc_{L^2(0,T;\,\mathcal{D})} +
\dc\bfu\dc_{L^{\infty}(0,T;\,V_{\kappa})} +
\dc\bfu'\dc_{L^2(0,T;\,L^2_{\kappa})} \leq \; \cc01
\Bigl(\dc\bff\dc_{L^2(0,T;\,L^2_{\kappa})} +
\dc\unu\dc_{V_{\kappa}}\Bigr).
\end{equation}
\end{theorem}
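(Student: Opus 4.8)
The plan is a Galerkin approximation with respect to the eigenfunctions $\bfphi_1,\bfphi_2,\dots$ constructed in Section~2. For $N\in\mathbb{N}$ we look for $\bfu_N(t)=\sum_{k=1}^N c_k(t)\bfphi_k$ with $\dz\bfu_N'(t),\bfphi_j\sh+\bdz\bfu_N(t),\bfphi_j\svajd=\dz\bff(t),\bfphi_j\sh$ for $j=1,\dots,N$ and $\bfu_N(0)=\sum_{k=1}^N\dz\unu,\bfphi_k\sh\bfphi_k$. Because $\dz\bfphi_k,\bfphi_j\sh=\delta_{kj}$ and $\bdz\bfphi_k,\bfphi_j\svajd=\lambda_k\delta_{kj}$, the system decouples into the scalar ODEs
\begin{displaymath}
c_k'(t)+\lambda_kc_k(t)=f_k(t),\qquad c_k(0)=\dz\unu,\bfphi_k\sh,\qquad f_k(t):=\dz\bff(t),\bfphi_k\sh,
\end{displaymath}
each uniquely solvable with $c_k\in W^{1,2}(0,T)$, so $\bfu_N$ is well defined. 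Note that the function associated with $\bfphi_k$ via \eqref{eq10b} is $\lambda_k\bfphi_k$, hence $\bdz\bfphi_i,\bfphi_j\svajd_{\mathcal{D}}=\lambda_i^2\delta_{ij}$ and $\dc\bfu_N(t)\dc_{\mathcal{D}}^2=\sum_{k\le N}\lambda_k^2c_k^2(t)$; moreover $\dc\bfu_N(0)\dc_{V_{\kappa}}^2=\sum_{k\le N}\lambda_k\dz\unu,\bfphi_k\sh^2\le\dc\unu\dc_{V_{\kappa}}^2$.

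Next I would derive a~priori bounds uniform in $N$. Multiplying the $j$-th ODE by $c_j$ and summing over $j$ gives the energy identity $\tfrac12\tfrac{d}{dt}\dc\bfu_N\dc_{L^2_{\kappa}}^2+\dc\bfu_N\dc_{V_{\kappa}}^2=\dz\bff,\bfu_N\sh$; using $\dz\bff,\bfu_N\sh\le\dc\bff\dc_{L^2_{\kappa}}\dc\bfu_N\dc_{L^2_{\kappa}}$, the Poincar\'e inequality in $V_{\kappa}$ (available because $\Gamma_D\neq\emptyset$), Young's inequality and integration, one controls $\bfu_N$ in $L^\infty(0,T;\,L^2_{\kappa})\cap L^2(0,T;\,V_{\kappa})$. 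Multiplying the $j$-th ODE instead by $\lambda_jc_j$ and summing gives $\tfrac12\tfrac{d}{dt}\dc\bfu_N\dc_{V_{\kappa}}^2+\dc\bfu_N\dc_{\mathcal{D}}^2=\sum_{j\le N}\lambda_jf_jc_j$; by the Cauchy--Schwarz and Parseval inequalities the right-hand side is $\le\dc\bff\dc_{L^2_{\kappa}}\dc\bfu_N\dc_{\mathcal{D}}\le\tfrac12\dc\bff\dc_{L^2_{\kappa}}^2+\tfrac12\dc\bfu_N\dc_{\mathcal{D}}^2$, so absorbing the last term, integrating, and using the bound on $\dc\bfu_N(0)\dc_{V_{\kappa}}$ yields a bound for $\bfu_N$ in $L^\infty(0,T;\,V_{\kappa})\cap L^2(0,T;\,\mathcal{D})$ by $\dc\bff\dc_{L^2(0,T;\,L^2_{\kappa})}+\dc\unu\dc_{V_{\kappa}}$. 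Finally, since $\bfu_N'(t)=\sum_{k\le N}(f_k(t)-\lambda_kc_k(t))\bfphi_k$, one has $\dc\bfu_N'(t)\dc_{L^2_{\kappa}}^2\le2\dc\bff(t)\dc_{L^2_{\kappa}}^2+2\dc\bfu_N(t)\dc_{\mathcal{D}}^2$, which after integration bounds $\bfu_N'$ in $L^2(0,T;\,L^2_{\kappa})$. Thus $\{\bfu_N\}$ is bounded in $X$, and by weak lower semicontinuity of the norms the eventual limit satisfies \eqref{eq11a}.

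Finally the limit passage. Along a subsequence $\bfu_N\rightharpoonup\bfu$ in $L^2(0,T;\,\mathcal{D})$, $\bfu_N\overset{\ast}{\rightharpoonup}\bfu$ in $L^\infty(0,T;\,V_{\kappa})$ and $\bfu_N'\rightharpoonup\bfu'$ in $L^2(0,T;\,L^2_{\kappa})$, so $\bfu\in X$. Linearity makes the limit passage in the Galerkin identity, tested against a fixed $\bfphi_j$ times an arbitrary $C_0^\infty(0,T)$ function, routine; this gives \eqref{eq10e} for $\bfv$ in the span of the $\bfphi_k$, hence by density for every $\bfv\in V_{\kappa}$. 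Since $\bfu_N(0)\to\unu$ in $V_{\kappa}$ while $\bfu_N(0)\rightharpoonup\bfu(0)$ in $L^2_{\kappa}$ (the trace at $t=0$ is continuous on $W^{1,2}(0,T;\,L^2_{\kappa})$), we get $\bfu(0)=\unu$. For uniqueness, the difference $\bfw\in X$ of two solutions solves \eqref{eq10e} with $\bff=\bfzero$ and $\bfw(0)=\bfzero$; testing with $\bfw(t)$ and using the identity behind \eqref{eq13d}, namely $\tfrac{d}{dt}\dc\bfw(t)\dc_{L^2_{\kappa}}^2=2\dz\bfw'(t),\bfw(t)\sh$, gives $\tfrac{d}{dt}\dc\bfw(t)\dc_{L^2_{\kappa}}^2=-2\dc\bfw(t)\dc_{V_{\kappa}}^2\le0$, whence $\bfw\equiv\bfzero$. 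I expect the one point needing care to be the second a~priori estimate: it is what forces the limit into $L^2(0,T;\,\mathcal{D})$ rather than only $L^2(0,T;\,V_{\kappa})$, and it relies on the identification $\dc\bfu_N\dc_{\mathcal{D}}^2=\sum\lambda_k^2c_k^2$ together with the $V_{\kappa}$-boundedness of the spectral projection; the rest is the standard linear parabolic Galerkin argument, here further simplified by the complete decoupling of the approximate equations.
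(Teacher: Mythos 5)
Your proposal is correct and follows essentially the same route as the paper: both diagonalize the problem in the eigenbasis $\{\bfphi_k\}$ and reduce it to the decoupled scalar ODEs $c_k'+\lambda_k c_k=f_k$ with the same data coefficients. The only differences are cosmetic --- the paper writes the solution directly as the full infinite series and derives the a~priori bounds by multiplying each ODE by $2\vartheta_k'$, whereas you truncate (Galerkin), multiply by $\lambda_j c_j$, and pass to a weak limit; the resulting estimates and the uniqueness argument are the same.
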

\begin{proof} Since $\bff\in L^2(0,T;\,L^2_{\kappa})$ and $\unu\in V_{\kappa}$, we have
\begin{equation}\label{eq12a}
\bff = \sumkjinf \mu_k(t)\bfphi_k,\quad\quad \unu =\sumkjinf
a_k\bfphi_k,
\end{equation}
where
\begin{equation}\label{eq13a}
\sumkjinf\intnt \mu_k^2(t)\dt + \sumkjinf a_k^2<\infty.
\end{equation}
(Meaning of (\ref{eq12a}) is that $\bff = \lim\limits_{n\to\infty}\sum_{k=1}^n\mu_k(t)\bfphi_k$ in $L^2(0,T;\,L^2_{\kappa})$ and $\unu = \lim\limits_{n\to\infty}\sum_{k=1}^n a_k\bfphi_k$ in $V_{\kappa}$.) Let $\vartheta_k$ be a solution of the ordinary differential
equation
\begin{equation}\label{eq14}
\vartheta'_k(t) + \lambda_k\vartheta_k(t) = \mu_k(t)
\end{equation}
(which holds for almost every $t \in (0,T)$) with the initial
condition
\begin{equation}\label{eq15}
\vartheta_k(0) = a_k
\end{equation}
for $k = 1,2,\dots$ Then
\begin{equation}\nonumber
\vartheta_k(t) = \int_0^t e^{\lambda_k(s-t)}\mu_k(s)\ds + a_k
e^{-\lambda_k t}
\end{equation}
 for every $\tnt$.
Hence $\vartheta_k\in W^{1,2}((0,t))$. Multiplying (\ref{eq14}) by
$2\vartheta_k'$ and integrating over $(0,t)$ we get
\begin{multline}
 2\int_0^t{\vartheta_k'}^2(s) \ds + \lambda_k\vartheta_k^2(t) =
\lambda_k{\vartheta_k}^2(0) + 2\int_0^t \mu_k(s){\vartheta_k}'(s)\ds \\
\leq \lambda_k\vartheta_k^2(0) + \int_0^t {{\vartheta_k}'}^2(s)\ds +
\int_0^t \mu_k^2(s)\ds
\end{multline}
for $k = 1,2,\dots$ and for every $\tnt$ and therefore
\begin{eqnarray}\label{eq16}
&\displaystyle\int_0^t{\vartheta_k'}^2(s) \ds +
\lambda_k\vartheta_k^2(t)\leq \lambda_k\vartheta_k^2(0) + \int_0^t
\mu_k^2(s)\ds.
\end{eqnarray}
Thus (\ref{eq16}) yields
\begin{multline}\label{eq17}
\sumkjinf\int_0^t{\vartheta_k'}^2(s)\ds +
\sumkjinf\lambda_k\vartheta_k^2(t)\leq
\sumkjinf\intnt{\vartheta_k'}^2(s)\ds +
\sumkjinf\lambda_k\vartheta_k^2(t) \\
 \leq 2\sumkjinf\lambda_k\vartheta_k^2(0) + 2\sumkjinf\intnt\mu_k^2(s)\ds
\end{multline}
for every $\tnt$ (remind that $k$ doesn't depend on $t$) and
therefore we get
\begin{equation}\label{eq18}
\bfu = \sumkjinf\vartheta_k(t)\bfphi_k\in
L^{\infty}(0,T;\,V_{\kappa}),\quad\bfu'\in L^2(0,T;\,L^2_{\kappa})
\end{equation}
and the generalized solution $\bfu$ satisfies the inequality
\begin{equation}\label{eq19}
\dc\bfu\|_{L^{\infty}(0,T;\,V_{\kappa})} +
\dc\bfu'\|_{L^2(0,T;\,L^2_{\kappa})}\leq
2\dc\bff\|_{L^2(0,T;\,L^2_{\kappa})} + 2\dc\unu\|_{V_{\kappa}}.
\end{equation}

(\ref{eq14}) yields also inequalities
\begin{equation}\nonumber
\lambda_k^2\vartheta_k^2(t)\leq 2\mu_k^2(t) + 2{\vartheta'}_k^2(t)
\end{equation}
for every $k = 1,2,\dots$ and for almost every $\tnt$. Therefore we
get \begin{displaymath} \sumkjinf \lambda_k^2
\int_0^t\vartheta_k^2(s)\ds\leq \sumkjinf \lambda_k^2
\int_0^T\vartheta_k^2(s)\ds\leq 2\sumkjinf\intnt\mu_k^2(s)\ds +
2\sumkjinf\intnt{\vartheta'}_k^2(s)\ds.
\end{displaymath}
The last inequality and (\ref{eq17}) yield
\begin{equation}\label{eq19a}
\sumkjinf\lambda_k^2\int_0^t\vartheta_k^2(s)\ds\leq
\sumkjinf\lambda_k^2\intnt\vartheta_k^2(s)\ds\leq
6\sumkjinf\intnt\mu_k^2(s)\ds + 4\sumkjinf\lambda_k\vartheta_k^2(0)
\end{equation}
for every $\tnt$. Therefore one obtains
\begin{displaymath}
\bfu\in L^2(0,T;\,\mathcal{D}).
\end{displaymath}
Moreover, \eqref{eq19a} implies the estimate
\begin{displaymath}
\|\bfu\|_{L^2(0,T;\,\mathcal{D})}\leq \cc02
\Bigl(\dc\bff\|_{L^2(0,T;\,L^2_{\kappa})} +
\dc\unu\|_{V_{\kappa}}\Bigr).
\end{displaymath}
The last inequality and (\ref{eq19}) imply (\ref{eq11a}). It is easy
to see that $\bfu \in X$ and
\begin{displaymath}
\dz \bfu'(t),\bfv\sh + \bdz \bfu(t),\bfv\svajd = \dz \bff(t),\bfv\sh
\end{displaymath}
for every $\bfv\in V_{\kappa}$ and for almost every $\tnt$ and that
\begin{displaymath}
\bfu(0)=\unu.
\end{displaymath}

The existence of the unique generalized solution $\bfu$ for given data $\bff$ and $\unu$ will now be proven.
Suppose that $\bfu_A, \bfu_B\in X$ are solutions of this problem for given data $\bff$ and $\unu$. We
prove that $\bfu_A = \bfu_B$.

Denote $\bfw = \bfu_A - \bfu_B$. Then
\begin{equation}\label{eq19b}
\dz \bfw'(t),\bfv\sh +\bdz \bfw(t),\bfv\svajd = 0
\end{equation}
for every $\bfv \in V_{\kappa}$ and for almost every $\tnt$ and
\begin{equation}\label{eq19c}
\bfw(0) = \bfzero.
\end{equation}
Multiplying (\ref{eq19b}) by $\bfw(t)$, integrating over $\nt$ and using (\ref{eq13d}) and (\ref{eq19c}) we obtain
\begin{equation}\label{eq19d}
\dc\bfw(T)\dc^2_{L^2_\kappa} + \intnt\dc\bfw(t)\dc^2_{V_{\kappa}}\dt = 0.
\end{equation}
Therefore we get $\bfw=\bfzero_X$ and consequently $\bfu_A = \bfu_B$. This
completes the proof.
\end{proof}
\bigskip

If $\bftheta,\,\bfpsi,\,\bfphi \in V_{\kappa}$, then
$b(\bftheta,\bfpsi,\bfphi )$ denotes the trilinear form
\begin{equation}\label{eq28}
b(\bftheta,\bfpsi,\bfphi ) =
\iom\theta_j{{\partial\psi_i}\over{\partial x_j}}\phi _i  \;\dom.
\end{equation}
The summation convention is used for repeated indices.

\begin{remark}\label{remark1} Let $\bftheta,\,\bfpsi\in \mathcal{D}$ then
 $b(\bftheta,\bfpsi,.)\in L^2_{\kappa}$. If
$\bfu,\bfw\in L^2(0,T;\,\mathcal{D})\cap
L^{\infty}(0,T;\,V_{\kappa})$ then $b(\bfu,\bfw,.) =
b(\bfu(t),\bfw(t),.)\in L^2(0,T;\,L^2_{\kappa})$. Moreover,
\phantom{$\cn03$}
\begin{equation}\label{eq28a}
\dc b(\bfu,\bfw,. )\|_{L^2(0,T;\,L^2_{\kappa})}\leq  \; \cc03 \;
\dc\bfu\|_{X}\dc\bfw\|_{X}.
\end{equation}
\end{remark}

\bigskip

Now we set up a generalized formulation of the Navier--Stokes
problem.
\begin{definition}\label{navier}
Let $\bff\in L^{2}(0,T;\,L^2_{\kappa})$, $\unu\in V_{\kappa}$. Then
$\bfu\in L^2(0,T;\,\mathcal{D})\cap L^{\infty}(0,T;\,V_{\kappa})$
with $\bfu'\in L^2(0,T;\,L^2_{\kappa})$ is called a generalized
solution of the problem (\ref{eq2}) -- (\ref{eq6}) on $(0,T)$ (a
generalized solution of the Navier--Stokes problem with the mixed
boundary conditions) with data $\bff$ and $\unu$ if and only if
\begin{equation}\label{eq29}
\dz\bfu'(t),\bfv\sh + \bdz\bfu(t),\bfv\svajd
+b(\bfu(t),\bfu(t),\bfv) = \dz\bff(t),\bfv\sh
\end{equation}
holds for all $\bfv \in V_{\kappa}$ and for almost every $t \in
(0,T)$, and
\begin{equation}\label{eq30}
\bfu(0) = \unu.
\end{equation}
\end{definition}

\bigskip

Define the operator $\mathcal{N}: X \rightarrow Y$ given by
\begin{equation}\label{mestac N}
\mathcal{N}(\bfu) := \left[ \dz\bfu'(t),.\sh + \bdz\bfu(t),.\svajd +
b(\bfu(t),\bfu(t),.);\, \bfu (0)  \right].
\end{equation}
\begin{remark}
Let $\bfu \in X$ and $[\bff;\, \bfu_0] \in Y$.  The generalized
problem can now be treated as one operator equation
\begin{displaymath}
\mathcal{N}(\bfu)=[\bff;\, \bfu_0].
\end{displaymath}
\end{remark}

\bigskip

Let $\bfu$ be a fixed point in $X$. Let $\mathcal{B}_u: X
\rightarrow Y$ be a linear operator defined by
\begin{equation}\label{nestac B}
\mathcal{B}_u (\bfw) := \left[  b(\bfu,\bfw,.) + b(\bfw,\bfu,.);\,
{\bf0}  \right].
\end{equation}
\begin{theorem}
Let $\bfu$ be  some arbitrary fixed element in $X$. The operator
$\mathcal{G}_u: X \rightarrow Y$  given by
\begin{equation}\label{nestac G}
\mathcal{G}_u(\bfw) := \mathcal{S}(\bfw) + \mathcal{B}_u(\bfw)
\end{equation}
is the Fr\'{e}chet derivative of $\mathcal{N}$ at the point $\bfu$,
$\mathcal{G}_u \in \mathcal{C}(X \times X, Y)$ and $\mathcal{N} \in
\mathcal{C}^1(X, Y)$.
\end{theorem}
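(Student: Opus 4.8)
The plan is to reduce the statement to the elementary expansion of the quadratic term of $\on$ together with one application of the trilinear bound (\ref{eq28a}). First I would record that the maps $\bfw\mapsto\dz\bfw'(t),.\sh$, $\bfw\mapsto\bdz\bfw(t),.\svajd$ and $\bfw\mapsto\bfw(0)$ are linear in $\bfw\in X$, so $\os$ is a bounded linear operator (its continuity has already been noted) and $\os(\bfu+\bfw)-\os(\bfu)=\os(\bfw)$ for all $\bfu,\bfw\in X$. Since $b(\cdot,\cdot,\bfv)$ is bilinear in its first two arguments,
\[
b(\bfu+\bfw,\bfu+\bfw,.)=b(\bfu,\bfu,.)+b(\bfu,\bfw,.)+b(\bfw,\bfu,.)+b(\bfw,\bfw,.),
\]
and substituting this into the definitions of $\on$, $\ob_u$ and $\ogu$ yields the exact identity
\[
\on(\bfu+\bfw)-\on(\bfu)-\ogu(\bfw)=\bigl[\,b(\bfw,\bfw,.);\ \bfzero\,\bigr]\qquad\text{for every }\bfw\in X .
\]

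Next I would estimate this remainder. Applying Remark~\ref{remark1} with both of the first two slots taken equal to $\bfw$ gives $\|b(\bfw,\bfw,.)\|_{L^2(0,T;\,L^2_{\kappa})}\le\cc03\|\bfw\|_X^2$, hence
\[
\bigl\|\on(\bfu+\bfw)-\on(\bfu)-\ogu(\bfw)\bigr\|_Y\le\cc03\|\bfw\|_X^2=o\bigl(\|\bfw\|_X\bigr)\ \text{ as }\ \|\bfw\|_X\to0 .
\]
Combined with the fact that $\ogu=\os+\ob_u$ is linear (immediate from its definition) and bounded --- boundedness follows from the continuity of $\os$ and from $\|\ob_u(\bfw)\|_Y=\|b(\bfu,\bfw,.)+b(\bfw,\bfu,.)\|_{L^2(0,T;\,L^2_{\kappa})}\le 2\cc03\|\bfu\|_X\|\bfw\|_X$ (Remark~\ref{remark1} again) --- this is exactly the assertion that $\ogu$ is the Fr\'echet derivative of $\on$ at $\bfu$.

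It then remains to verify the two continuity claims. The map $(\bfu,\bfw)\mapsto\ogu(\bfw)=\os(\bfw)+\ob_u(\bfw)$ is the sum of the continuous linear map $\os$ (which does not depend on $\bfu$) and the map $(\bfu,\bfw)\mapsto\ob_u(\bfw)$, which is bilinear and, by the bound just displayed, satisfies $\|\ob_u(\bfw)\|_Y\le 2\cc03\|\bfu\|_X\|\bfw\|_X$; a bounded bilinear map between Banach spaces is continuous, so $\ogu\in\oc(X\times X,Y)$. For $\on\in\oc^1(X,Y)$ it then suffices to see, in addition to the pointwise differentiability above, that $\bfu\mapsto\ogu$ is continuous from $X$ into the Banach space of bounded linear operators $X\to Y$; but for any $\bfu_1,\bfu_2\in X$ and $\bfw\in X$ one has $\ob_{\bfu_1}(\bfw)-\ob_{\bfu_2}(\bfw)=\bigl[\,b(\bfu_1-\bfu_2,\bfw,.)+b(\bfw,\bfu_1-\bfu_2,.);\ \bfzero\,\bigr]$, whence $\|\mathcal{G}_{\bfu_1}-\mathcal{G}_{\bfu_2}\|\le 2\cc03\|\bfu_1-\bfu_2\|_X$, so this dependence is in fact Lipschitz. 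I do not anticipate a genuine obstacle: the whole argument is the quadratic expansion plus a single use of the already established estimate (\ref{eq28a}); the one point to watch is that $\ob_u$ depends \emph{bilinearly}, not merely linearly, on $\bfu$, and it is precisely this that yields both the joint continuity of $\ogu$ on $X\times X$ and the operator-norm continuity of $\bfu\mapsto\ogu$.
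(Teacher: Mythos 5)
Your proposal is correct and follows essentially the same route as the paper: the exact quadratic identity $\on(\bfu+\bfw)-\on(\bfu)-\ogu(\bfw)=[b(\bfw,\bfw,.);\,\bfzero]$ followed by the estimate (\ref{eq28a}) to conclude the remainder is $o(\|\bfw\|_X)$. The only difference is that you spell out the boundedness of $\ob_u$ and the (Lipschitz) continuity of $\bfu\mapsto\ogu$, which the paper dismisses as obvious; your added detail is accurate.
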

\begin{proof} Since
\begin{equation}\label{eqpom5}
\|  \mathcal{N}(\bfu + \bfw) - \mathcal{N}(\bfu) -
\mathcal{G}_u(\bfw)  \|_Y = \|[b(\bfw,\bfw,.);\, {\bf0}]\|_Y
\end{equation}
and (\ref{eq28a}) yields the estimate
\begin{equation}\label{eqpom6}
\|[b(\bfw,\bfw,.);\, {\bf0}]\|_Y \leq  \;  \cc03 \; \|\bfw\|^2_X,
\end{equation}
we get
\begin{displaymath}
\lim_{\|\bfw\|_X \rightarrow 0}  \frac{\| \mathcal{N}(\bfu + \bfw) -
\mathcal{N}(\bfu) - \mathcal{G}_u(\bfw)  \|_Y}{\|\bfw\|_X} = 0  .
\end{displaymath}
$\mathcal{N} \in \mathcal{C}^1(X, Y)$ and the smoothness
$\mathcal{G}_u \in \mathcal{C}(X \times X, Y)$ is obvious. The proof
is complete.
\end{proof}

\section{Main result}
We can now state the main result of the paper.
\begin{theorem}[{\bf Main result}]\label{maintheorem1}
Let $\bfu\in X$, $[\bff;\,\bfu_0]\in Y$ and $\bfu$ be the
generalized solution of the Navier--Stokes initial--boundary value
problem with right hand side $\bff$ and initial velocity $\bfu_0$,
formulated by the operator equation ${\cal
N}(\bfu)=[\bff;\,\bfu_0]$. Then there exist open sets ${\cal
U}\subset X$ and ${\cal V}\subset Y$ such that $\bfu\in {\cal U}$,
$[\bff;\,\bfu_0]\in{\cal V}$ and for every
$[\widetilde{\bff};\,\widetilde{\bfu}_0]\in{\cal V}$ there exists
unique $\widetilde{\bfu}\in{\cal U}$ which is a generalized solution of the
Navier-Stokes initial--boundary value problem with right hand side $\widetilde{\bff}$
and initial velocity $\widetilde{\bfu_0}$, formulated by the
operator equation ${\cal
N}(\widetilde{\bfu})=[\widetilde{\bff};\,\widetilde{\bfu}_0]$.
Conversely, let $\widetilde{\bfu}\in{\cal U}$. Then there exists unique
$[\widetilde{\bff};\,\widetilde{\bfu}_0]\in{\cal V}$ such that $\widetilde{\bfu}\in{\cal U}$
is a generalized solution of the
Navier-Stokes initial--boundary value problem with right hand side $\widetilde{\bff}$
and initial velocity $\widetilde{\bfu_0}$.
\end{theorem}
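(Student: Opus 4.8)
The plan is to obtain Theorem \ref{maintheorem1} as an immediate application of the Local Diffeomorphism Theorem for $C^1$ maps between Banach spaces: if $\mathcal{F}\in C^1(X,Y)$ and the Fr\'echet derivative $\mathcal{F}'(\bfu)\colon X\to Y$ at a point $\bfu$ is a bounded linear bijection, then there exist open sets $\mathcal{U}\ni\bfu$ in $X$ and $\mathcal{V}\ni\mathcal{F}(\bfu)$ in $Y$ such that $\mathcal{F}$ restricts to a $C^1$-diffeomorphism of $\mathcal{U}$ onto $\mathcal{V}$. Applying this with $\mathcal{F}=\mathcal{N}$, the last theorem of Section 3 already provides $\mathcal{N}\in C^1(X,Y)$ together with $\mathcal{N}'(\bfu)=\mathcal{G}_u=\mathcal{S}+\mathcal{B}_u$, and the three assertions of Theorem \ref{maintheorem1} (existence of $\mathcal{U}$ and $\mathcal{V}$ with $\bfu\in\mathcal{U}$, $[\bff;\bfu_0]\in\mathcal{V}$; unique solvability of $\mathcal{N}(\widetilde\bfu)=[\widetilde\bff;\widetilde\bfu_0]$ in $\mathcal{U}$ for each datum in $\mathcal{V}$; and the converse, that each $\widetilde\bfu\in\mathcal{U}$ has a unique datum $\mathcal{N}(\widetilde\bfu)\in\mathcal{V}$) are precisely the content of the statement that $\mathcal{N}|_{\mathcal{U}}\colon\mathcal{U}\to\mathcal{V}$ is a bijection. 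Hence the whole proof reduces to checking that $\mathcal{G}_u\colon X\to Y$ is a linear homeomorphism; by the open mapping theorem it suffices to show it is a continuous bijection. Continuity follows from Theorem \ref{stokes} for $\mathcal{S}$ and from estimate (\ref{eq28a}) in Remark \ref{remark1} for $\mathcal{B}_u$, so the real point is bijectivity.

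To establish that $\mathcal{G}_u$ is one-to-one and onto $Y$, fix $[\bfg;\bfomega]\in Y$ and look for $\bfw\in X$ solving $\mathcal{S}(\bfw)+\mathcal{B}_u(\bfw)=[\bfg;\bfomega]$, i.e.\ the Stokes variational identity of Theorem \ref{stokes} with the two additional terms $b(\bfu(t),\bfw(t),\bfv)+b(\bfw(t),\bfu(t),\bfv)$ on the left-hand side and with $\bfw(0)=\bfomega$. For injectivity (the case $[\bfg;\bfomega]=\bfzero_Y$) I would test with $\bfv=\bfw(t)$ and invoke (\ref{eq13d}); since for $V_\kappa$-functions the cancellation $b(\bfu,\bfw,\bfw)=0$ of the pure-Dirichlet setting is no longer available (traces on $\Gamma_N$ do not vanish), one rewrites $b(\bfu,\bfw,\bfw)$ as a boundary integral over $\Gamma_N$ and estimates it, together with $b(\bfw,\bfu,\bfw)$, by a trace/Ladyzhenskaya-type interpolation of the form $\varepsilon\|\bfw\|_{V_\kappa}^2+c_\varepsilon\,\|\bfu\|_{L^\infty(0,T;V_\kappa)}^2\,\|\bfw\|_{L^2_\kappa}^2$. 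After absorbing $\varepsilon\|\bfw\|_{V_\kappa}^2$ into the dissipation term, Gronwall's lemma with $\bfw(0)=\bfzero$ forces $\bfw\equiv\bfzero_X$, just as in the uniqueness argument (\ref{eq19b})--(\ref{eq19d}) for $\mathcal{S}$. For surjectivity I would factor $\mathcal{G}_u=\mathcal{S}\,(\mathrm{Id}_X+\mathcal{S}^{-1}\mathcal{B}_u)$, with $\mathcal{S}^{-1}$ bounded by Theorem \ref{stokes}; the compact embeddings (\ref{eq13ee}), (\ref{eq13g}), the Aubin--Lions lemma, and the $L^\infty$-in-time bound (\ref{eq13e}) (with interpolation in $t$) make $\mathcal{S}^{-1}\mathcal{B}_u\colon X\to X$ compact, so $\mathrm{Id}_X+\mathcal{S}^{-1}\mathcal{B}_u$ is Fredholm of index zero; being injective as well (by the injectivity of $\mathcal{G}_u$), it is onto, hence so is $\mathcal{G}_u$. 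Alternatively, surjectivity can be produced by a Galerkin scheme in the basis $\{\bfphi_k\}$ with uniform estimates of the type (\ref{eq17}), (\ref{eq19a}) closed by Gronwall's lemma, followed by passage to the limit.

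The one genuine obstacle is the lack of a favourable sign in the convective terms: because elements of $V_\kappa$ have nontrivial traces on $\Gamma_N$, the clean cancellation $b(\bfu,\bfw,\bfw)=0$ of the Dirichlet theory is lost, and the injectivity (energy) estimate has to be recovered by a Gronwall argument in which the surviving boundary/trilinear terms are controlled by quantities finite in $t$ over $(0,T)$ --- which is exactly what the compact embedding $X\hookrightarrow\hookrightarrow L^4(0,T;W^{1,3}(\om)^2)$ from (\ref{eq13g}) and the bound $X\hookrightarrow L^\infty(0,T;V_\kappa)$ from (\ref{eq13e}) deliver. Once $\mathcal{G}_u$ has been shown to be an isomorphism of $X$ onto $Y$, the conclusion of Theorem \ref{maintheorem1} is one line of the Local Diffeomorphism Theorem.
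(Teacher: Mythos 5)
Your proposal is correct and follows essentially the same route as the paper: reduce the theorem to the Local Diffeomorphism Theorem, prove injectivity of $\mathcal{G}_u=\mathcal{S}+\mathcal{B}_u$ by testing with $\bfw(t)$ and applying Gronwall's lemma, and obtain surjectivity from the compactness of $\mathcal{B}_u$ together with the Fredholm alternative (the paper cites this as Theorem \ref{theorem10}, whereas you re-derive it via the factorization $\mathcal{S}(\mathrm{Id}_X+\mathcal{S}^{-1}\mathcal{B}_u)$, which is the standard proof of that theorem). The only cosmetic difference is that the paper bounds $b(\bfu,\bfw,\bfw)$ and $b(\bfw,\bfu,\bfw)$ directly by H\"older and interpolation inequalities rather than converting the former to a boundary integral over $\Gamma_N$.
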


We prepare the following lemmas and propositions to prove our main
result which is postponed to the end of this section.

\begin{lemma}
Let $\bfu \in X$. Then $\mathcal{B}_u$ is a compact operator from
$X$ into $Y$.
\end{lemma}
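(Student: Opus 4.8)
The plan is to show that $\mathcal{B}_u$ maps bounded sequences in $X$ to sequences in $Y$ possessing convergent subsequences. Since $\mathcal{B}_u(\bfw) = [b(\bfu,\bfw,.) + b(\bfw,\bfu,.);\, \bfzero]$, the second component is identically zero, so the whole task reduces to showing that the linear map $\bfw \mapsto b(\bfu,\bfw,.) + b(\bfw,\bfu,.)$ is compact from $X$ into $L^2(0,T;\,L^2_\kappa)$. So I would take a sequence $\bfw_n$ bounded in $X$, say $\|\bfw_n\|_X \le M$, and produce a subsequence along which $b(\bfu,\bfw_n,.)$ and $b(\bfw_n,\bfu,.)$ converge strongly in $L^2(0,T;\,L^2_\kappa)$.

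The key tool is the compact embedding \eqref{eq13g}, namely $X \hookrightarrow\hookrightarrow L^4(0,T;\,W^{1,3}(\om)^2) \hookrightarrow L^4(0,T;\,L^q(\om)^2)$ for all finite $q$; also $X \hookrightarrow\hookrightarrow L^2(0,T;\,W^{1,6}(\om)^2)$ from the line preceding \eqref{eq13g}. So I would first pass to a subsequence (not relabeled) such that $\bfw_n \to \bfw$ strongly in $L^4(0,T;\,W^{1,3}(\om)^2)$ and, using the other compact embedding, also strongly in $L^2(0,T;\,L^q(\om)^2)$ for a convenient $q$. By linearity it then suffices to estimate $\|b(\bfu,\bfw_n - \bfw,.)\|_{L^2(0,T;\,L^2_\kappa)}$ and $\|b(\bfw_n - \bfw,\bfu,.)\|_{L^2(0,T;\,L^2_\kappa)}$ and show these tend to zero. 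For the term $b(\bfw_n - \bfw,\bfu,.)$, I would bound pointwise in time using Hölder: $\|b(\bfz,\bfu,.)\|_{L^2(\om)} \le \|\bfz\|_{L^6(\om)}\,\|\nabla\bfu\|_{L^3(\om)}$ (since $1/6 + 1/3 + 1/2 = 1$), then integrate in time with Hölder in $t$ (exponents $4$ and $4/3$, or simply $4$ and $4$ after noting $\nabla\bfu \in L^2(0,T;\,L^6)$ via $X \hookrightarrow L^2(0,T;\,W^{1,6})$ and $\bfz_n := \bfw_n - \bfw \to 0$ in $L^4(0,T;\,L^6)$, which follows from the compact embeddings above); this gives $\|b(\bfz_n,\bfu,.)\|_{L^2(0,T;\,L^2_\kappa)} \le c\,\|\bfz_n\|_{L^4(0,T;\,L^6(\om)^2)}\,\|\bfu\|_{L^4(0,T;\,W^{1,3}(\om)^2)} \to 0$. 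The term $b(\bfu,\bfw_n - \bfw,.)$ is handled symmetrically, with the roles of the gradient and the function interchanged: $\|b(\bfu,\bfz_n,.)\|_{L^2(\om)} \le \|\bfu\|_{L^6(\om)}\,\|\nabla\bfz_n\|_{L^3(\om)}$, and then $\|b(\bfu,\bfz_n,.)\|_{L^2(0,T;\,L^2_\kappa)} \le c\,\|\bfu\|_{L^4(0,T;\,L^6(\om)^2)}\,\|\bfz_n\|_{L^4(0,T;\,W^{1,3}(\om)^2)} \to 0$ by the compactness of $X \hookrightarrow\hookrightarrow L^4(0,T;\,W^{1,3}(\om)^2)$. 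Note that one must also check $b(\bfu,\bfz_n,.), b(\bfz_n,\bfu,.) \in L^2_\kappa$ for a.e. $t$; this is exactly the content of Remark \ref{remark1} applied with the two arguments in $L^4(0,T;\,W^{1,3})$ and hence, after the Hölder splitting, landing in $L^2(0,T;\,L^2_\kappa)$, and the membership in $L^2_\kappa$ (rather than merely $L^2(\om)^2$) follows because the trilinear form with the first two slots coming from $V_\kappa$-valued functions produces elements orthogonal to the gradient fields in the appropriate sense, as already used implicitly in Remark \ref{remark1}.

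The main obstacle, and the only place requiring care, is the choice of integrability exponents so that all three Hölder factors (the two arising from the product $\bfz_n \cdot \nabla(\cdot)$ or $\bfu\cdot\nabla\bfz_n$ in space, and the time exponent) close up while using only a \emph{compact} embedding of $X$ for the factor involving $\bfz_n$ and a merely \emph{continuous} embedding for the factor involving the fixed $\bfu$. The choice above — $L^4_t L^6_x$ (from the continuous embedding $X \hookrightarrow L^2(0,T;\,W^{1,6}) \cap L^\infty(0,T;\,V_\kappa)$ by interpolation, or directly from \eqref{eq13g}) paired against $L^4_t W^{1,3}_x$ (the compact embedding \eqref{eq13g}) — works because $1/6 + 1/3 + 1/2 = 1$ in space and $1/4 + 1/4 = 1/2$ in time. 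Everything else is routine: extract the subsequence, apply the displayed bounds, conclude that $\{\mathcal{B}_u(\bfw_n)\}$ has a convergent subsequence in $Y$, hence $\mathcal{B}_u$ is compact.
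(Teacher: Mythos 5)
Your argument is correct and follows essentially the same route as the paper: extract a subsequence converging strongly in $L^4(0,T;\,W^{1,3}(\om)^2)$ via the compact embedding \eqref{eq13g}, then close the two Hölder estimates with the spatial splitting $L^6\times L^3\to L^2$ and the temporal exponents $4$ and $4$, keeping the compact embedding on the factor involving $\bfw_{n}-\bfw$ and only continuity on the factor involving the fixed $\bfu$. The paper does exactly this (passing first through weak convergence in $X$, which is an inessential difference), so no further comparison is needed.
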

\begin{proof} Let $\left\{ \bfw_n \right\} \subset X$ be a
bounded sequence. We prove that there exists a subsequence $\left\{
\bfw_{n_k} \right\}$ of $\left\{ \bfw_n \right\}$ and $\bfw\in X$
such that $ b(\bfu,\bfw_{n_k},.)+b(\bfw_{n_k},\bfu,.)\to
b(\bfu,\bfw,.)+b(\bfw,\bfu,.) $ in $L^2(0,T;\,L^2_{\kappa})$.

Since $X$ is reflexive and $\left\{ \bfw_n \right\}$ is bounded in
$X$, there exists a subsequence $\left\{ \bfw_{n_k} \right\}$ and
 $\bfw \in X$ such that
\begin{equation*}
\hspace{7mm}\bfw_{n_k} \rightarrow \bfw \qquad \textmd{ weakly in }
X.
\end{equation*}
Using (\ref{eq13g}) we obtain
\begin{eqnarray}\label{conv3}
&&\bfw_{n_k}\rightarrow\bfw \qquad \textmd{ in }
L^4(0,T;\,W^{1,3}(\om)^2)
\end{eqnarray}
and
\begin{displaymath}
\bfu \in L^4(0,T;\,W^{1,3}(\om)^2).
\end{displaymath}
Since
\begin{eqnarray}\nonumber
\| (\bfu\cdot\nabla)(\bfw_{n_k}- \bfw)\|^2_{L^2(0,T;\,L^2_\kappa)}
&\leq& c \; \int^T_0 \| \bfu \|^2_{L^6{(\Omega})^2} \|\nabla (
\bfw_{n_k}- \bfw) \|^2_{L^3{(\Omega})^2} \;{\rm d}t \\
\nonumber &\leq& c \; \left( \int^T_0 \| \bfu \|^4_{L^6{(\Omega})^2}
\;{\rm d}t \right)^{1/2} \left(
\int^T_0 \| \bfw_{n_k}-\bfw \|^4_{\,W^{1,3}(\om)^2)} \;{\rm d}t \right)^{1/2}\\
&\leq&   c \; \| \bfu \|^2_{L^4(0,T;\,W^{1,3}(\om)^2)} \|
\bfw_{n_k}- \bfw \|^2_{L^4(0,T;\,W^{1,3}(\om)^2)}, \nonumber
\end{eqnarray}
(\ref{conv3}) implies that
\begin{equation}\label{conv1}
\| (\bfu\cdot\nabla)(\bfw_{n_k}- \bfw) \|_{L^2(0,T;\,L^2_{\kappa})}
\rightarrow 0.
\end{equation}
The same way, the estimate
\begin{eqnarray*}
\| ((\bfw_{n_k}- \bfw)\cdot\nabla)
\bfu\|^2_{L^2(0,T;\,L^2_{\kappa})} &\leq& c \; \int^T_0 \|
\bfw_{n_k}-\bfw \|^2_{L^6{(\Omega})^2} \| \nabla \bfu
\|^2_{L^3{(\Omega})^2}
\; {\rm d}t  \\
&\leq& c \; \left( \int^T_0  \| \bfw_{n_k}-\bfw
\|^4_{L^6{(\Omega})^2} \; {\rm d}t \right)^{1/2} \left( \int^T_0 \|
\nabla \bfu \|^4_{L^3{(\Omega})^2}
\; {\rm d}t \right)^{1/2}  \\
&\leq& c \;   \| \bfw_{n_k}- \bfw \|^2_{L^4(0,T;\,W^{1,3}(\om)^2)}
\| \bfu \|^2_{L^4(0,T;\,W^{1,3}(\om)^2}
\end{eqnarray*}
and (\ref{conv3}) imply
\begin{equation}\label{conv2}
\| ((\bfw_{n_k}- \bfw)\cdot\nabla) \bfu\|_{L^2(0,T;\,L^2_{\kappa})}
\rightarrow 0.
\end{equation}
Therefore
\begin{displaymath}
\|(\bfu\cdot\nabla)(\bfw_{n_k}- \bfw)+ ((\bfw_{n_k}-
\bfw)\cdot\nabla) \bfu\|_{L^2(0,T;\,L^2_{\kappa})} \rightarrow 0.
\end{displaymath}
It is easy to see that
\begin{displaymath}
\mathcal{B}_{\bfu}(\bfw_{n_k})\to\mathcal{B}_{\bfu}(\bfw) \quad
\text{ in } Y.
\end{displaymath}
 The proof is complete.
\end{proof}

\begin{lemma}\label{injective Bu}
Let $\bfu \in X$.  $\mathcal{G}_u$ is an injective operator from $X$
to $Y$.
\end{lemma}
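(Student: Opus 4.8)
The plan is to show that $\mathcal{G}_u(\bfw) = \bfzero_Y$ forces $\bfw = \bfzero_X$. Writing out the definition of $\mathcal{G}_u = \mathcal{S} + \mathcal{B}_u$, the equation $\mathcal{G}_u(\bfw) = \bfzero_Y$ means precisely that for almost every $t\in(0,T)$ and every $\bfv\in V_{\kappa}$,
\begin{equation}\label{eqinj1}
\dz\bfw'(t),\bfv\sh + \bdz\bfw(t),\bfv\svajd + b(\bfu(t),\bfw(t),\bfv) + b(\bfw(t),\bfu(t),\bfv) = 0,
\end{equation}
together with the initial condition $\bfw(0) = \bfzero$. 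Since $\bfw\in X$, we have $\bfw(t)\in V_{\kappa}$ for all $t$, so we may test with $\bfv = \bfw(t)$. Using \eqref{eq13d} to justify that $\tfrac{d}{dt}\dc\bfw(t)\dc^2_{L^2_\kappa} = 2\dz\bfw'(t),\bfw(t)\sh$ is integrable, integrating \eqref{eqinj1} over $(0,t)$ and invoking $\bfw(0)=\bfzero$ gives
\begin{equation}\label{eqinj2}
\tfrac12\dc\bfw(t)\dc^2_{L^2_\kappa} + \int_0^t\dc\bfw(s)\dc^2_{V_{\kappa}}\ds + \int_0^t\bigl(b(\bfu,\bfw,\bfw) + b(\bfw,\bfu,\bfw)\bigr)\ds = 0.
\end{equation}

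The next step is to control the two trilinear terms. The term $b(\bfu,\bfw,\bfw)$ vanishes in the Dirichlet case by skew-symmetry, but here the functions in $V_{\kappa}$ need not vanish on $\Gamma_N$, so I would not rely on that; instead I would estimate both terms directly. Using the embedding \eqref{eq13g}, in particular $X\hookrightarrow L^4(0,T;L^q(\om)^2)$ and the control of $\nabla\bfw$ in $L^3$, together with $\dc\bfw(t)\dc_{V_{\kappa}}$ controlling the gradient in $L^2$, one gets a pointwise-in-$t$ bound of the form $|b(\bfu,\bfw,\bfw)| + |b(\bfw,\bfu,\bfw)| \le c\,\dc\bfu(t)\dc_{V^{1,3}}\,\dc\bfw(t)\dc_{L^q}\,\dc\bfw(t)\dc_{V_{\kappa}}$ for suitable $q$; interpolating $\dc\bfw(t)\dc_{L^q}$ between $L^2_\kappa$ and $V_{\kappa}$ and applying Young's inequality absorbs one factor $\dc\bfw(t)\dc^2_{V_{\kappa}}$ into the second term of \eqref{eqinj2}, leaving
\begin{equation}\label{eqinj3}
\dc\bfw(t)\dc^2_{L^2_\kappa} \le c\int_0^t g(s)\,\dc\bfw(s)\dc^2_{L^2_\kappa}\ds
\end{equation}
for some $g\in L^1(0,T)$ built from norms of $\bfu$.

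Finally, Gronwall's inequality applied to \eqref{eqinj3} with the initial value $0$ yields $\dc\bfw(t)\dc_{L^2_\kappa} = 0$ for all $t$, hence $\bfw \equiv \bfzero$ in $L^2(0,T;L^2_\kappa)$; combined with \eqref{eqinj2} this also gives $\int_0^T\dc\bfw(t)\dc^2_{V_{\kappa}}\dt = 0$, and since $\bfw\in L^2(0,T;\mathcal{D})$ with $\bfw'\in L^2(0,T;L^2_\kappa)$ we conclude $\bfw = \bfzero_X$. Thus $\mathcal{G}_u$ is injective. The main obstacle is the handling of the trilinear terms: unlike the classical setting there is no free skew-symmetry cancellation, so the argument genuinely needs the improved integrability of elements of $X$ coming from \eqref{eq13g} (and the parabolic smoothing encoded in the definition of $\mathcal{D}$) to close the estimate; getting the exponents in the interpolation/Young step to match up so that a full power of $\dc\bfw(t)\dc^2_{V_{\kappa}}$ is absorbed is the delicate point.
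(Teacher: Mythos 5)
Your proposal is correct and follows essentially the same route as the paper: test the equation $\mathcal{G}_u(\bfw)=\bfzero_Y$ with $\bfv=\bfw(t)$, estimate both trilinear terms (without any skew-symmetry cancellation) by interpolation and Young's inequality so that the $\dc\bfw(t)\dc^2_{V_{\kappa}}$ term is absorbed, and conclude by Gronwall from $\bfw(0)=\bfzero$. The paper just makes the ``delicate point'' you flag explicit with the concrete 2D Ladyzhenskaya-type bounds $|b(\bfu,\bfw,\bfw)|\le\|\nabla\bfw\|^{7/4}\|\bfw\|^{1/4}\|\bfu\|_{L^4}$ and $|b(\bfw,\bfu,\bfw)|\le\|\nabla\bfw\|^{3/2}\|\bfw\|^{1/2}\|\nabla\bfu\|$, yielding the integrable factor $\|\bfu(t)\|^8_{L^4}+\|\nabla\bfu(t)\|^4_{L^2}$, which is even in $L^\infty(0,T)$ by \eqref{eq13e}.
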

\begin{proof} Suppose that $\mathcal{G}_u (\bfw)=\textbf{0}_Y$. Then
\begin{displaymath}
\dz\bfw'(t),\bfv\sh + \bdz\bfw(t),\bfv\svajd
+b(\bfw(t),\bfu(t),\bfv)+b(\bfu(t),\bfw(t),\bfv)\,=\,0
\end{displaymath}
holds for all $\bfv \in V_{\kappa}$ and every $t \in (0,T)$ and
$\bfw(0)\equiv \textbf{0}$. Hence
\begin{eqnarray*}
\frac{1}{2}\frac{d}{\dt}\|\bfw(t)\|^2_{L^2_{\kappa}} + \|\nabla
\bfw(t)\|^2_{L^2_{\kappa}} &\leq&
|b(\bfu(t),\bfw(t),\bfw(t))|+|b(\bfw(t),\bfu(t),\bfw(t))|  \\
&\leq&  \|\nabla\bfw(t)\|^{7/4}_{L^2(\Omega)^2}
\|\bfw(t)\|^{1/4}_{L^2_{\kappa}} \|\bfu(t)\|_{L^4(\Omega)^2} \\
&& \lefteqn{+\|\nabla \bfw(t) \|^{3/2}_{L^2(\Omega)^2}
\|\bfw(t)\|^{1/2}_{L^2_{\kappa}} \| \nabla \bfu(t)\|_{L^2(\Omega)^2} }\\
&\leq&\frac{1}{2}\|\nabla\bfw(t)\|^2_{L^2(\Omega)^2} + c \,
\|\bfw(t)\|^2_{L^2_{\kappa}} \|\bfu(t)\|^8_{L^4(\Omega)^2} \\  &&
\lefteqn{+ \frac{1}{2}
 \|\nabla \bfw(t)
\|^2_{L^2(\Omega)^2}  + c \; \|\bfw(t)\|^2_{L^2_{\kappa}} \|
\nabla \bfu(t)\|^4_{L^2(\Omega)^2}}\\
\end{eqnarray*}
and therefore
\begin{equation}
\frac{d}{\dt}\|\bfw(t)\|^2_{L^2_{\kappa}} \leq c \; \| \bfw(t)
\|^2_{L^2_{\kappa}} \left(\|\bfu(t)\|^8_{L^4(\Omega)^2}+\|\nabla
\bfu(t)\|^4_{L^2(\Omega)^2}\right).
\end{equation}
It is easy to see that $\bigl(\|\bfu(t)\|^8_{L^4(\Omega)^2}+\|\nabla
\bfu(t)\|^4_{L^2(\Omega)^2}\bigr)\in L^1((0,T))$ and $\|\bfw(0)\|^2_{L^2_{\kappa}}=0$.
Using Gronwall's lemma, we obtain $\bfw\equiv\bfzero$. The proof is complete.
\end{proof}

We remind the well known Local Diffeomorphism Theorem.
\begin{theorem}\label{Local diffeomorphism theorem}
Let $\mathcal{X}$ and $\mathcal{Y}$  be Banach spaces, $f$ be a
mapping from $\mathcal{X}$ into  $\mathcal{Y}$ belonging to
$\mathcal{C}^1$ in some neighborhood $V$ of a point $\bfu$. If
$f'(\bfu): X \rightarrow Y$ is one-to-one and onto $\mathcal{Y}$ and
continuous, then there exists a neighborhood $U$ of point $\bfu$, $U
\subset V$ and a neighborhood $W$ of point $f(\bfu)$, $W \subset
\mathcal{Y}$ such that $f$ is one-to-one from $V$ onto $W$.
\end{theorem}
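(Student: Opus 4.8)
The statement is the classical inverse function theorem in Banach spaces, and the plan is to turn the equation $f(x)=y$ into a fixed-point problem and invoke Banach's contraction principle. First I would normalize the situation: replacing $f$ by the map $x\mapsto f(x+\bfu)-f(\bfu)$ we may assume $\bfu=\bfzero$ and $f(\bfzero)=\bfzero$. The crucial structural input is that $f'(\bfzero):\mathcal{X}\to\mathcal{Y}$ is a continuous linear bijection between Banach spaces; by the bounded inverse theorem (a consequence of the open mapping theorem) its inverse $L:=f'(\bfzero)^{-1}$ is itself a bounded linear operator, and I set $M:=\|L\|<\infty$. This is the only nontrivial functional-analytic ingredient — passing from "$f'(\bfu)$ is one-to-one, onto and continuous" to "$f'(\bfu)^{-1}$ is bounded" — and everything after it is elementary.

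Next I would introduce the auxiliary map $g(x):=x-Lf(x)$, which is $\mathcal{C}^1$ on $V$ with $g(\bfzero)=\bfzero$ and $g'(\bfzero)=I-Lf'(\bfzero)=0$. Since $g'$ is continuous, there is $r>0$ with $\overline{B_r(\bfzero)}\subset V$ and $\|g'(x)\|\le\tfrac12$ on $\overline{B_r(\bfzero)}$; the mean-value inequality on the convex ball then gives $\|g(x_1)-g(x_2)\|\le\tfrac12\|x_1-x_2\|$ there, hence $\|g(x)\|\le\tfrac12\|x\|\le\tfrac12 r$. For fixed $y$, solving $f(x)=y$ is equivalent to the fixed-point equation $x=g(x)+Ly=:g_y(x)$. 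If $\|y\|<\rho:=r/(2M)$, then $g_y$ maps $\overline{B_r(\bfzero)}$ into itself (indeed into $B_r(\bfzero)$) and is a $\tfrac12$-contraction, so Banach's theorem yields a unique $x=x(y)\in\overline{B_r(\bfzero)}$ with $f(x)=y$, and in fact $x(y)\in B_r(\bfzero)$.

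Finally I would read off the conclusion. Injectivity on $\overline{B_r(\bfzero)}$ is immediate: $f(x_1)=f(x_2)$ forces $x_1-x_2=g(x_1)-g(x_2)$, whence $\|x_1-x_2\|\le\tfrac12\|x_1-x_2\|$ and $x_1=x_2$. Setting $W:=B_\rho(\bfzero)$ (open) and $U:=f^{-1}(W)\cap B_r(\bfzero)$, the map $f$ sends $U$ bijectively onto $W$, and $U$ is an open neighbourhood of $\bfzero$ since $f$ is continuous and $f(\bfzero)=\bfzero$. The inverse is even Lipschitz: subtracting $x_i=g(x_i)+Ly_i$ gives $\|x_1-x_2\|\le\tfrac12\|x_1-x_2\|+M\|y_1-y_2\|$, i.e. $\|f^{-1}(y_1)-f^{-1}(y_2)\|\le 2M\|y_1-y_2\|$. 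Undoing the initial translation yields the theorem. I expect no genuine obstacle here beyond the careful bookkeeping of the radii; the only substantive step is the use of the bounded inverse theorem, and the remainder is the routine contraction argument.
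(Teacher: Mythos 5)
Your proof is correct: it is the standard contraction-mapping argument (bounded inverse theorem to get $L=f'(\bfu)^{-1}$ bounded, then Banach's fixed-point theorem for $x\mapsto x-Lf(x)+Ly$ on a small closed ball). The paper itself offers no proof of this theorem --- it is quoted as the ``well known Local Diffeomorphism Theorem'' --- so there is nothing to compare against; note only that your argument in fact establishes the bijection from the neighbourhood $U$ onto $W$, which is what the paper's statement must mean despite its typo ``one-to-one from $V$ onto $W$''.
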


The following theorem plays crucial role in the proof of our main
result (see \cite[Theorem 5.5.F]{Tay}).
\begin{theorem}\label{theorem10}
Let $\mathcal{X}$, $\mathcal{Y}$ be Banach spaces, $\mathcal{F}_o$
be a one-to-one operator from $\mathcal{X}$ onto $\mathcal{Y}$,
$\mathcal{F}_c$ be a compact linear operator $\mathcal{X}$ into
$\mathcal{Y}$. The following statements are equivalent:

 (a) $\mathcal{F}_o + \mathcal{F}_c$ is an injective operator

 (b) $\mathcal{F}_o + \mathcal{F}_c$ is an operator onto
 $\mathcal{Y}$ .
\end{theorem}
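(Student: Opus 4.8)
The plan is to reduce the equivalence to the classical Riesz--Schauder (Fredholm) alternative for operators of the form ``identity plus compact'' on a single Banach space. The device is the factorization
\[
\mathcal{F}_o + \mathcal{F}_c \;=\; \mathcal{F}_o\,\bigl(I + \mathcal{F}_o^{-1}\mathcal{F}_c\bigr).
\]
Here I understand $\mathcal{F}_o$ to be a \emph{bounded} linear bijection $\mathcal{X}\to\mathcal{Y}$ (as it is in our application, where $\mathcal{F}_o=\mathcal{S}$); the bounded inverse theorem then furnishes $\mathcal{F}_o^{-1}\in\mathcal{L}(\mathcal{Y},\mathcal{X})$, so the right-hand side makes sense. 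Setting $K:=\mathcal{F}_o^{-1}\mathcal{F}_c$, the operator $K\colon\mathcal{X}\to\mathcal{X}$ is compact, being the composition of the compact operator $\mathcal{F}_c$ with the bounded operator $\mathcal{F}_o^{-1}$.

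Next I would transport injectivity and surjectivity across the bijection $\mathcal{F}_o$. Because $\mathcal{F}_o$ is one-to-one, $(I+K)x=0$ implies $(\mathcal{F}_o+\mathcal{F}_c)x=0$, and conversely $(\mathcal{F}_o+\mathcal{F}_c)x=0$ forces $(I+K)x=0$; hence $\mathcal{F}_o+\mathcal{F}_c$ is injective if and only if $I+K$ is injective. Because $\mathcal{F}_o$ maps $\mathcal{X}$ onto $\mathcal{Y}$, one has $\mathrm{ran}(\mathcal{F}_o+\mathcal{F}_c)=\mathcal{F}_o\bigl(\mathrm{ran}(I+K)\bigr)$, and since $\mathcal{F}_o$ is a bijection this equals $\mathcal{Y}$ if and only if $\mathrm{ran}(I+K)=\mathcal{X}$; hence $\mathcal{F}_o+\mathcal{F}_c$ is onto $\mathcal{Y}$ if and only if $I+K$ is onto $\mathcal{X}$. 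Thus the theorem is equivalent to the assertion: for a compact operator $K$ on a Banach space $\mathcal{X}$, the map $I+K$ is injective if and only if it is surjective.

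This last assertion is the classical Fredholm alternative, which I would simply invoke (e.g.\ via \cite[Theorem 5.5.F]{Tay}) rather than reproving. For completeness, the route is the Riesz theory of compact operators: $\ker(I+K)$ is finite-dimensional, $\mathrm{ran}(I+K)$ is closed, the kernels and ranges of the iterates $(I+K)^n$ stabilize as $n\to\infty$, and from this one extracts $\dim\ker(I+K)=\mathrm{codim}\,\mathrm{ran}(I+K)$, which delivers both implications at once (injectivity forces the codimension to vanish, hence surjectivity, and conversely).

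The main obstacle, should one want a self-contained argument, is precisely this Riesz theory --- establishing that $\mathrm{ran}(I+K)$ is closed and that the null space and the cokernel of $I+K$ have equal finite dimension. Everything else in the proof (the open mapping theorem, the compactness of $K$, and the two transfer-of-properties equivalences above) is entirely routine.
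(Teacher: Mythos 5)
Your argument is correct, and it is worth noting that the paper itself offers no proof of this statement at all: it is imported verbatim from Taylor's book (the citation \cite[Theorem 5.5.F]{Tay} accompanying the theorem is the paper's entire justification). Your reconstruction is the standard one: factor $\mathcal{F}_o+\mathcal{F}_c=\mathcal{F}_o(I+\mathcal{F}_o^{-1}\mathcal{F}_c)$, observe that $K=\mathcal{F}_o^{-1}\mathcal{F}_c$ is compact on $\mathcal{X}$, transport injectivity and surjectivity across the bijection $\mathcal{F}_o$, and invoke the Riesz--Schauder alternative for $I+K$. The one point you rightly flag is that the theorem as stated does not say $\mathcal{F}_o$ is bounded; your reading of ``operator'' as bounded linear (so that the bounded inverse theorem gives $\mathcal{F}_o^{-1}\in\mathcal{L}(\mathcal{Y},\mathcal{X})$) is the intended one and is satisfied in the application, where $\mathcal{F}_o=\mathcal{S}$ is the continuous solution operator of the Stokes problem. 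There is nothing to fault here; if anything, your sketch supplies the justification the paper leaves implicit.
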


\begin{proof}[ {\bf Proof of Theorem \ref{maintheorem1}} ]
$\mathcal{G}_u$ is a sum of the operators $\mathcal{S}$ and
$\mathcal{B}_u$. Note that $\mathcal{S}: X \rightarrow Y$ is the
one-to-one operator and onto $Y$ and
 $\mathcal{B}_u: X \rightarrow Y$  is a compact operator.
 Moreover $\mathcal{G}_u = \mathcal{S}+\mathcal{B}_u$ is a one-to-one mapping.
 Using Theorem \ref{theorem10} and Lemma \ref{injective Bu} we get that $\mathcal{G}_u$ is
 a one-to-one operator and onto $Y$. The continuity of $\mathcal{G}_u$ is
 obvious. Finally, Theorem \ref{Local diffeomorphism theorem} yields the
 assertion.
\end{proof}

\begin{remark}[Uniqueness of the operator $\mathcal{N}$]
Let $\bfu_1$, $\bfu_2$ $\in$ $X$, $\mathcal{N}(\bfu_1) =
\mathcal{N}(\bfu_2)$, then $\bfu_1 = \bfu_2$.
\end{remark}
\begin{proof}[Sketch of the proof] Denote $\bfw=\bfu_1 - \bfu_2$ then
\begin{displaymath}
[\dz\bfw'(t),\bfv\sh + \dz\bfw(t),\bfv\svajd
+\bfb(\bfw(t),\bfu_2(t),\bfv)+\bfb(\bfu_1(t),\bfw(t),\bfv);\,\bfw(0)
]= \textbf{0}_Y.
\end{displaymath}
Using procedure similar to that in the proof of Lemma \ref{injective
Bu} we get $\bfw =\bfu_1 - \bfu_2 = \textbf{0}_X$.
\end{proof}

\appendix

\section{The steady Stokes problem with mixed boundary
conditions}\label{section_stationary}

In this appendix we prove some results on the regularity of the
steady Stokes system with mixed boundary conditions. We further use
these results in order to prove the continuous embedding
\eqref{eq8}.

Let us consider the boundary value problem
\begin{eqnarray}
-  \Delta \bfvartheta  +  \nabla q &=& \bfsigma \quad \textmd{in}
\quad \Omega,\label{stationary S 2D}\\
\nabla \cdot \bfvartheta &=& 0 \quad \, \textmd{in} \quad
\Omega,\label{rovnice kontinuity S}\\
\bfvartheta &=& \textbf{0}  \quad
\textmd{on} \quad \Gamma_D,\label{dirichlet S}\\
- q\bfn +  \frac{\partial \bfvartheta}{\partial \bfn} &=& \textbf{0}
\quad \textmd{on} \quad \Gamma_N,\label{neumann S}
\end{eqnarray}
where $\bfvartheta=(\vartheta_1,\vartheta_2)$ denotes the velocity
field, $q$ is the associated pressure and $\bfsigma \in
L^2(\Omega)^2$ is a body force.

A pair $(\bfvartheta,q)\in V_{\kappa}\times L^2(\Omega)$ is called
the weak solution of the problem (\ref{stationary S
2D})--(\ref{neumann S}) if $\bfvartheta$ satisfies
\begin{equation}\label{eq8a}
((\bfvartheta,\bfv))=\langle\bfsigma,\bfv\rangle
\end{equation}
for all $\bfv \in V_{\kappa}$ and  $\bfvartheta$ and $q$ satisfy the
equation (\ref{stationary S 2D}) in $\Omega$ in the sense of
distributions. Here $\langle \cdot , \cdot \rangle$ denotes the
duality between $V_{\kappa}$ and $V_{\kappa}^*$.
 Since bilinear form $((. ,.))$ is $V_{\kappa}$-elliptic
there exists a unique $\bfvartheta\in V_{\kappa}$ such that
(\ref{eq8a}) holds. By \cite[Chapter I, Proposition 1.2.]{Te} there
exists $q\in  L^2(\Omega)$ such that equation (\ref{stationary S
2D}) is satisfied in the sense of distributions and
\begin{equation}\label{eq8b}
\dc\bfvartheta\dc_{V_{\kappa}} + \dc q\dc_{L^2(\Omega)}\leq\cn12\dc
\bfsigma\dc_{{L^2(\Omega)^2}},
\end{equation}
where $\cc12 = \cc12(\om)$.
 Our aim is to
prove the next theorem, which immediately implies (\ref{eq8}).

\begin{theorem}\label{maintheorap}
Let $\bfsigma \in L^2(\Omega)^2$ and $(\bfvartheta,q)$ be a weak
solution of \eqref{stationary S 2D}--\eqref{neumann S} with the
right hand side $\bfsigma$. Then $(\bfvartheta,q)$ belongs to
$W^{2,2}(\Omega)^2 \times W^{1,2}(\Omega)$. Moreover,
\begin{eqnarray}\label{eq23a}
\|\bfvartheta\|_{W^{2,2}(\om)^2} + \|q\|_{W^{1,2}(\om)} \leq \cn07
\, \|\bfsigma\|_{L^2(\Omega)^2}
\end{eqnarray}
with some constant $\cc07 = \cc07(\om)$.
\end{theorem}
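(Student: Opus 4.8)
The plan is to localise by a partition of unity and treat separately the three qualitatively different parts of the problem: the smooth Dirichlet wall $\Gamma_D$, the flat ``do--nothing'' part $\Gamma_N$, and the corner points $A\in\overline{\Gamma_D}\cap\overline{\Gamma_N}$ where the boundary condition changes type. Choose a finite open cover of $\overline{\om}$ by balls contained in $\om$, ``Dirichlet patches'' meeting $\pom$ only in $\Gamma_D$, ``Neumann patches'' meeting $\pom$ only in $\Gamma_N$, and one small ``corner patch'' around each $A$, with a subordinate partition of unity $\{\eta_k\}$. For each $k$ the pair $(\eta_k\bfvartheta,\eta_k q)$ solves a Stokes system of the same type, with right--hand side $\eta_k\bfsigma$ plus commutator terms controlled by $\|\bfvartheta\|_{V_{\kappa}}+\|q\|_{L^2(\om)}$ (which by \eqref{eq8b} is $\le c\,\|\bfsigma\|_{L^2(\om)^2}$), and with the inherited boundary conditions; so it suffices to get an $H^2\times H^1$ estimate on each patch.

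On the interior balls this is classical interior regularity for the Stokes system. On the Dirichlet patches it is the classical up--to--the--boundary Stokes regularity with homogeneous Dirichlet data, available because $\Gamma_D\in C^\infty$ (flatten the boundary by a smooth diffeomorphism and use Agmon--Douglis--Nirenberg estimates or tangential difference quotients). On a Neumann patch $\Gamma_N$ is flat, so tangential translations preserve both the patch and the condition $\partial\bfvartheta/\partial\bfn-q\bfn=\bfzero$; hence the tangential difference quotients of $(\bfvartheta,q)$ solve the same system with data a difference quotient of $\bfsigma$, and testing the weak formulation with them gives uniform $L^2$--bounds on $\partial_\tau\nabla\bfvartheta$ and $\partial_\tau q$. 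The momentum equation $-\Delta\bfvartheta+\nabla q=\bfsigma$ together with $\diver\bfvartheta=0$ then recovers the remaining normal second derivatives of $\bfvartheta$ and the normal derivative of $q$. In each case one obtains $\|\eta_k\bfvartheta\|_{W^{2,2}}+\|\eta_k q\|_{W^{1,2}}\le c\,(\|\bfsigma\|_{L^2(\om)^2}+\|\bfvartheta\|_{V_{\kappa}}+\|q\|_{L^2(\om)})$.

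The corner patch is the crux and the step I expect to be the genuine obstacle. By the standing geometric assumptions, after a smooth change of variables straightening $\Gamma_D$ near $A$ (and keeping $\Gamma_N$, already flat and perpendicular, straight) the domain becomes locally the quarter--plane $K=\{(r,\varphi):r>0,\ 0<\varphi<\pi/2\}$, with homogeneous Dirichlet data on $\{\varphi=0\}$ and the ``do--nothing'' condition on $\{\varphi=\pi/2\}$, and the system is the constant--coefficient Stokes system up to lower--order terms. I would follow the weighted--Sobolev--space theory of Kozlov \emph{et al.}~\cite{KozMazRoss}: the Mellin transform in $r$ reduces the regularity to the location of the spectrum of the operator pencil $\mathfrak A(\lambda)$ of the Stokes operator with these mixed conditions on the arc $(0,\pi/2)$, and the weak solution (which lies in $W^{1,2}\times L^2$, i.e.\ carries only exponents $\operatorname{Re}\lambda>0$) decomposes near $A$ into singular terms $c_j\,r^{\lambda_j}\bfPhi_j(\varphi)$ over the eigenvalues $\lambda_j$ with $0<\operatorname{Re}\lambda_j\le1$ plus a $W^{2,2}\times W^{1,2}$ remainder obeying the corresponding weighted a priori estimate. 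To pin down $\mathfrak A$ I would pass to the stream function: in two dimensions $\diver\bfvartheta=0$ gives $\bfvartheta=\nabla^{\perp}\psi$, the homogeneous Stokes system becomes $\Delta^2\psi=0$ (with $q$ recovered from $\psi$ up to a constant, fixed by the $\Gamma_N$ condition), the Dirichlet condition reads $\psi=\partial_\varphi\psi=0$ at $\varphi=0$, and the ``do--nothing'' condition becomes two scalar conditions on $\psi$ at $\varphi=\pi/2$. Inserting $\psi=r^{\lambda+1}\Phi(\varphi)$ yields a $4\times4$ homogeneous linear system whose determinant is a transcendental function of $\lambda$ vanishing exactly at the eigenvalues. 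The decisive computation is to verify that, for the opening $\pi/2$ with this mixed pair of conditions, this determinant has no zero with $0<\operatorname{Re}\lambda<1$ and that the only zero with $\operatorname{Re}\lambda=1$ is $\lambda=1$, whose eigenfunctions correspond to velocities that are restrictions of affine (hence $C^\infty$) functions and therefore lie in $W^{2,2}$ locally. Hence all obstructing coefficients $c_j$ vanish and $\bfvartheta\in W^{2,2}$, $q\in W^{1,2}$ near $A$, with the matching estimate.

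Summing the patch estimates gives $(\bfvartheta,q)\in W^{2,2}(\om)^2\times W^{1,2}(\om)$ together with
\[
\|\bfvartheta\|_{W^{2,2}(\om)^2}+\|q\|_{W^{1,2}(\om)}\le c\,\bigl(\|\bfsigma\|_{L^2(\om)^2}+\|\bfvartheta\|_{V_{\kappa}}+\|q\|_{L^2(\om)}\bigr),
\]
and then \eqref{eq8b} absorbs the lower--order terms, yielding \eqref{eq23a} with a constant depending only on $\om$. (Alternatively, once $W^{2,2}\times W^{1,2}$--regularity is known, \eqref{eq23a} follows from the closed graph theorem applied to the everywhere--defined linear solution operator $\bfsigma\mapsto(\bfvartheta,q)$, its graph being closed by uniqueness of the weak solution.) The only nonroutine ingredient is the spectral analysis of the corner pencil; the rest is textbook elliptic regularity.
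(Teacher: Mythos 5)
Your outline follows the same overall strategy as the paper's proof: localization by cut-off functions, interior Stokes regularity, classical up-to-the-boundary estimates on the smooth Dirichlet portion and on the flat Neumann portion (the paper invokes \cite[Theorem IV.4.1]{Ga} and the Agmon--Douglis--Nirenberg theory on a smoothed auxiliary domain rather than tangential difference quotients, but this is immaterial), and a pencil-operator spectral analysis at the corner points in the spirit of Kozlov~\emph{et al.}~\cite{KozMazRoss,KozMazRoss2001}. Your Mellin-transform/stream-function reduction to $\Delta^2\psi=0$ is a cosmetic variant of the paper's substitution $r=e^{\xi}$ followed by a complex Fourier transform applied directly to the velocity--pressure system; both routes end in a $4\times4$ transcendental determinant, and your predicted spectral picture --- no eigenvalues in the open strip, a single simple eigenvalue at the edge whose eigenfunctions are affine velocities and constant pressure --- agrees with \eqref{determinant DN}, Remark \ref{roots} and \eqref{general_solution_11}. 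You defer the determinant computation itself, which is the technical heart of the appendix, but you identify correctly what must be verified.

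There is one concrete gap in the corner step beyond the deferred computation. The eigenvalue you locate at ${\rm Re}\,\lambda=1$ (the paper's $\lambda=-{\rm i}$ in the Fourier convention) sits \emph{exactly on} the critical line ${\rm Im}\,\lambda=2/p-2$ for $p=2$, and the decomposition theorem you wish to invoke (Theorem \ref{theorem_kozmazross}) requires the bounding lines of the strip to be free of eigenvalues; as written, your argument applies the decomposition in a situation where its hypotheses fail. The paper circumvents this by establishing an eigenvalue-free region up to ${\rm Im}\,\lambda=-1-\varepsilon$ (apart from the simple eigenvalue at $-{\rm i}$), deducing a $W^{2,2+\varepsilon}$ bound for data in $L^{2+\varepsilon}$ and a $W^{2,2-\varepsilon}$ bound for data in $L^{2-\varepsilon}$, and then \emph{interpolating} the solution operator between these two endpoints to reach $p=2$; some such device is needed and is absent from your outline. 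A smaller point: your assertion that ``all obstructing coefficients $c_j$ vanish'' is not the right mechanism --- in \eqref{general_solution_11} the intensity factors $c_1,\dots,c_4$ are in general nonzero; what saves regularity is that the associated singular functions are polynomials in $(x_1,x_2)$ (affine velocity, constant pressure) and hence already belong to $W^{2,p}\times W^{1,p}$, so they can be absorbed into the regular part. The conclusion is the same, but the reason you give is not.
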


Note that the system (\ref{stationary S 2D})--(\ref{neumann S})
represents an elliptic boundary value problem in the sense of Agmon,
Douglis and Nirenberg \cite[Chapter I.1.]{AgDoNi} and \cite[Part
III., Chapter 1., Section 1.4.]{GaRaRoTu}. The general questions
about solvability (Fredholm's property) and regularity of solutions
to the linear elliptic boundary value problems in domains with
corners are solved for instance in \cite{Kon} by Kondrat'ev, in
\cite{KozMazRoss,KozMazRoss2001} by Kozlov~\textit{ et al.} and in
\cite{KufSan} by Kufner and S\"{a}ndig.

Let $\om_1$, $\om_2$ be arbitrary open sets such that
$\om_1\subset\overline{\om}_1\subset\om_2\subset\overline{\om}_2\subset\om$.
By \cite[Theorem IV.4.1]{Ga} the pair $(\bfvartheta,q)$ belongs
to $W^{2,2}(\Omega_1)^2\times W^{1,2}(\om_1)$ and
\begin{eqnarray*}\label{eq23b}
\|\bfvartheta\|_{W^{2,2}(\om_1)^2} + \|q\|_{W^{1,2}(\om_1)} \leq\;
\cn14 \; (\|\bfsigma\|_{L^2(\Omega_2)^2} + \|q\|_{L^(\om_2)} + \|\bfvartheta\|_{W^{1,2}(\om_2)^2}),
\end{eqnarray*}
where $\cc14 = \cc14(\om_1,\om_2)$. This estimate and (\ref{eq8b}) imply
\begin{eqnarray}\label{eq23b}
\|\bfvartheta\|_{W^{2,2}(\om_1)^2} + \|q\|_{W^{1,2}(\om_1)} \leq\;
\cn04 \; \|\bfsigma\|_{L^2(\Omega)^2},
\end{eqnarray}
where $\cc04 = \cc04(\om_1,\om)$.

In order to show that the solution is locally regular at points $P$
on $\Gamma_N$, we use an appropriate infinitely differentiable
cut--off function, which equals $1$ in a small neighbourhood
$U_{\tau}(P)$ ($U_{\tau}(P)$ denotes the ball of radius $\tau$
centered at the point $P$) and $0$ outside $U_{2\tau}(P)$, whose
values are between $0$ and $1$ in $U_{2\tau}(P)\smallsetminus
U_{\tau}(P)$ and which depends only on the distance from point $P$.
Multiplying equation (\ref{stationary S 2D}) by this cut--off
function and using the assumption that $\Gamma_N$ is a (open) line
segment, we transform the problem (\ref{stationary S
2D})--(\ref{neumann S}) to the system
%
%
\begin{eqnarray}
-  \Delta \bfvartheta'  +  \nabla q' &=& \bfsigma' \quad \textmd{in
} \om_3,
\label{adn1}\\
\nabla \cdot \bfvartheta' &=& \chi'\quad \, \textmd{in
}\om_3,
\label{adn2}\\
- q'\bfn +  \frac{\partial \bfvartheta'}{\partial \bfn} &=&
\textbf{0} \quad \textmd{ on }  \partial\om_3,\label{adn3}
\end{eqnarray}
where $\om_3\subset\om$ is an appropriate smooth domain containing
$U_{2\tau}(P)\cap\Omega$,
$\bfsigma'\in L^2(\om_3)^3$ and $\chi'\in
W^{1,2}(\om_3)$. The new unknown functions $\bfvartheta'$
and $q'$, respectively, coincide with $\bfvartheta$ and $q$ in
$U_{\tau}(P)$. Moreover,
\begin{equation}\label{eqhelpest1}
\dc \bfsigma'\|_{L^2(\om_3)^2}
+\dc\chi'\|_{W^{1,2}(\om_3)}\leq c\,\dc
\bfsigma\|_{L^2(\Omega)^2}.
\end{equation}
(This cut--off function procedure is described in greater detail
e.g.~in \cite[Theorem D.1]{Bocev}.) The system
(\ref{adn1})--(\ref{adn3}) is of the Agmon-Douglis-Nirenberg type
(ADN). Using the regularity theory for elliptic systems in smooth
domains (cf. \cite{AgDoNi}, \cite[Theorem D.1]{Bocev}), we deduce
that any weak solution of $(\bfvartheta',q')$ of
\eqref{adn1}--\eqref{adn3} belongs to $W^{2,2}(\om_3)^2\times
W^{1,2}(\om_3)$ and satisfies the estimate
\begin{eqnarray*}\label{estimate_ADN}
\|\bfvartheta'\|_{W^{2,2}(\om_3)^2} +
\|q'\|_{W^{1,2}(\om_3)} \leq\; c\, (
\|\bfsigma\|_{L^2(\om_3)^2}+\|\chi'\|_{W^{1,2}(\om_3)}+\|\bfvartheta\|_{L^2(\om_3)^2}).
\end{eqnarray*}
This estimate, (\ref{eq8b}) and (\ref{eqhelpest1}) imply that
\begin{eqnarray}\label{estimate_ADN1}
\|\bfvartheta\|_{W^{2,2}(\om_N)^2} +
\|q\|_{W^{1,2}(\om_N)} \leq\; \cn05
\|\bfsigma\|_{L^2(\Omega)^2},
\end{eqnarray}
$\om_N = \om\cap U_{\tau}(P)$, which confirms that the solution $(\bfvartheta,q)$ is regular in the
neighbourhood of point $P$, $\cc05 = \cc05(\om_N)$.

In order to show that the solution is locally regular at points $P$
on $\Gamma_D$, we apply the analogous cut-off function technique
with the only difference that the boundary condition is
\begin{equation}\label{adn4}
\bfvartheta = \bfzero \quad \textmd{ on }  \partial\om_D,
\end{equation}
where $\om_D$ is an appropriate smooth domain in
$U_{2\tau}(P)\cap\Omega$, and we obtain the estimate
\begin{eqnarray}\label{estimate_ADN2}
\|\bfvartheta\|_{W^{2,2}(\om_D)^2} +
\|q\|_{W^{1,2}(\om_D)} \leq\; \cn08
\|\bfsigma\|_{L^2(\Omega)^2},
\end{eqnarray}
where $\cc08 = \cc08(\om_D)$.

\subsection{ Local regularity at the point in which the boundary
conditions change their type}

We have explained that the weak solution $(\bfvartheta,q)$ of
problem of \eqref{stationary S 2D}--\eqref{neumann S} belongs to
$W_{loc}^{2,2}(\Omega)^2\times W_{loc}^{1,2}(\om)$ and satisfies
\eqref{eq23b}. Furthermore, this solution is ``locally regular'' in
the neighborhood of an arbitrary point $P$ in
$\Gamma_D\cup\Gamma_N$. To prove that the solution $(\bfvartheta,q)$
is ``globally regular'', we need to show that it is ``locally
regular'' at the point $A$, where the boundary conditions change the
type (see Fig.~\ref{localization}). Since the complete proof is long
and relatively technical, we sketch its main ideas in the next
subsection.

\subsubsection{Basic ideas of the proof of regularity in a neighbourhood
of the corner points}

We apply the method, developed by Kondrat'ev, Kozlov, Kufner,
M\"arkl, Maz´ya, Oleinik, Orlt, Rosman and S\" andig, whose
principles are explained e.g.~in \cite{Kon}, \cite{KonOle},
\cite{KozMazRoss}, \cite{KozMazRoss2001}, \cite{KufSan},
\cite{MarklSan} and \cite{OrltSan}.

The weak solvability of the problem (\ref{stationary S
2D})--(\ref{neumann S}) is known. It is explained above that in
order to prove the regularity of the weak solution in the whole
domain $\Omega$, it remains to verify the regularity in some
neighbourhood of point $A$. Recall that $A$ is the point on the
boundary, where the boundary conditions change the type. At first we
localize the boundary value problem (\ref{stationary S
2D})--(\ref{neumann S}) in the neighbourhood of $A$ by means of an
appropriate cut-off function $\eta$ (by analogy with the steps
described above). We choose the origin of the coordinate system to
be identical with point $A$ (see Fig. \ref{localization}). Suppose
that the cut--off function $\eta = \eta(|\bfx|) \in
{C}^{\infty}(\mathbb{R}^2)$ satisfies $0 \leq \eta(|\bfx|)\leq 1$
and
\begin{equation}\label{cut_of_function}
\eta(|\bfx|) =  \quad  \left\{
\begin{array}{ccl}
1 & {\rm for} &  |\bfx| < \delta / 2 , \\
0 & {\rm for} &  |\bfx| > \delta.
\end{array} \right.
\end{equation}
(Here $\delta$ is a positive number so small that $A$ is the only
corner point in the circle $\left\{\bfx: |\bfx|\leq\delta
\right\}$.)

Denote $\bfw = \eta\bfvartheta$ and $Q = \eta q$. Let $\mathcal{K}$
be the angle of the size ${\pi/2}$, enclosed by the two
perpendicular tangential vectors to $\partial \Omega$ at point $A$.
Since $(\bfvartheta,q)$ solves equations (\ref{stationary S
2D})--(\ref{rovnice kontinuity S}), $(\bfw,Q)$ satisfies equations
(\ref{eq20a}) and (\ref{eq20b}) in $\mathcal{K}$.

Let $\tilde{S} =\left\{ (\xi,\omega): \xi \in \mathbb{R}, \,
0<\omega<{\pi / 2}  \right\}$ be an infinite strip (see Fig.
\ref{infinite_strip}). By means of the change of coordinates
$(x_1,x_2)\rightarrow (\xi,\omega)$, where $(r,\omega)$ are the
polar coordinates with the origin $A$ and $\xi=\log r$, we transform
the pair $(\bfw,Q)$ to the pair $(\tilde{\bfw},\tilde{Q})$. We shall
see in subsection A.2 that $(\tilde{\bfw},\tilde{Q})$ solves the
equations (\ref{eq21a})--(\ref{eq21c}) (the so called model problem)
in $\tilde{S}$.

Applying the complex Fourier transform (see \cite[Chapter
I, Section I]{KufSan}, with respect to the variable $\xi$, we transform the
pair $(\tilde{\bfw},\tilde{Q})$ (of the variables $\xi$, $\omega$)
to the pair $(\widehat{\bfw},\widehat{Q})$ (depending on $\lambda$,
$\omega$). If we consider $\lambda$ to be fixed then
$(\widehat{\bfw},\widehat{Q})$, satisfy, as functions of only one
variable $\omega$, the system of three ordinary differential
equations (\ref{eq22a})--(\ref{eq22c}) on the interval $(0,\pi/2)$
with parameter $\lambda$ (see equations (\ref{eq22a})--(\ref{eq22c})
in subsection A.2). This system can be written in the form of one
operator equation
$\mathcal{A}(\lambda)(\hat{w}_1,\hat{w}_2,\hat{Q})=(\hat{G}_1,\hat{G}_2,\hat{H})$,
where the mapping
\begin{displaymath}
\lambda\to\mathcal{A}(\lambda):W^{2,2}((0; \pi/2))^2\times
W^{1,2}((0; \pi/2))\to L^2((0; \pi/2))^2\times W^{1,2}((0; \pi/2))
\end{displaymath}
is defined by (\ref{eq24a}) for all $\lambda\in\mathbb{C}$.
\smallskip

\begin{figure}[h]
\centering \includegraphics[width=7cm]{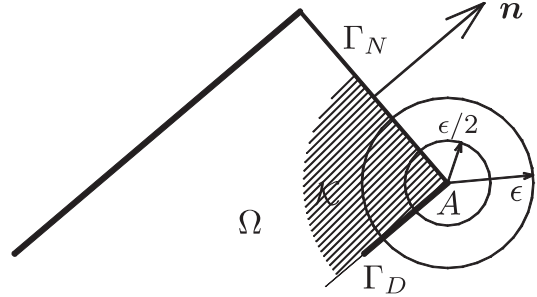}\\
\caption{Localization of the problem near the corner point $A$ where
the boundary conditions change their type.}\label{localization}
\end{figure}

Transforming the boundary conditions to the same way, we receive the
boundary conditions (\ref{eq25a})--(\ref{eq25b}) (for $\omega = 0$)
and (\ref{eq25c})--(\ref{eq25d}) (for $\omega = \pi/2$). Further, we
define certain matrix operators
\begin{gather*}
\mathcal{B}_{DN,1}(\lambda):W^{2,2}((0; \pi/2))^2\times W^{1,2}((0;
\pi/2))\to \mathbb{C}^2, \\ \noalign{\vskip 4pt}
\mathcal{B}_{DN,2}(\lambda):W^{2,2}((0; \pi/2))^2\times W^{1,2}((0;
\pi/2))\to \mathbb{C}^2,
\end{gather*}
associated with the boundary conditions (\ref{eq25a})--(\ref{eq25b})
and (\ref{eq25c})--(\ref{eq25d}), see (\ref{BDN1}) and (\ref{BDN2})
for details. These operators naturally depend on parameter
$\lambda$. Then we put
\begin{displaymath}
\widehat{\mathcal{L}}(\lambda):=\left[\mathcal{A} (\lambda);\,
\mathcal{B}_{DN,1}(\lambda);\,\mathcal{B}_{DN,2}(\lambda)\right]
\end{displaymath}
for $\lambda\in \mathbb{C}$ (see (\ref{operL})). Operator
$\widehat{\mathcal{L}}(\lambda)$ maps $W^{2,2}(0,\pi/2)^2 \times
W^{1,2}(0,\pi/2)$ into $L^{2}(0,\pi/2)^2 \times W^{1,2}(0,\pi/2)
\times \mathbb{C}^2 \times \mathbb{C}^2$.

The parameter dependent operator $\widehat{\mathcal{L}}(\lambda)$ is
a so called pencil operator corresponding to the problem
(\ref{stationary S 2D})--(\ref{neumann S}). Note that it is possible
to define the pencil operator at every boundary point for every
elliptic boundary value problem in the sense of Agnon, Douglis and
Nirenberg. Note further that every generalized steady Stokes system
(with arbitrary type of boundary conditions) is elliptic boundary
value problems in the sense of Agnon, Douglis and Nirenberg. Now we
define its eigenvalues and simple eigenvalues.

\begin{definition}
The complex number $\lambda=\lambda_0$ is an eigenvalue of
$\widehat{\mathcal{L}}(\lambda)$ if there exists a nontrivial
solution $\widehat{\bfU}(.,\lambda_0)\in
\mathscr{D}(\widehat{\mathcal{L}}(\lambda))$ which is holomorphic at
$\lambda_0$, $\widehat{\bfU}(.,\lambda_0)\neq \bf0$, and
$\widehat{\mathcal{L}}(\lambda_0)\widehat{\bfU}(.,\lambda_0)=\bf0$.
$\widehat{\bfU}(.,\lambda_0) = \widehat{\bfU}(\omega,\lambda_0)$ is
an eigenfunction of $\widehat{\mathcal{L}}(\lambda_0)$ with respect
to $\lambda_0$.
\end{definition}

\begin{definition}
Let $\lambda_0$ be an eigenvalue of
$\widehat{\mathcal{L}}(\lambda)$. We say that it is a simple
eigenvalue if
$\widehat{\mathcal{L}}'(\lambda_0)\widehat{\bfU}(.,\lambda_0)=\bf0$
only for $\widehat{\bfU}(.,\lambda_0)=\bf0$.
\end{definition}

Note (see e.g.~in \cite{Kon}, \cite{KonOle}, \cite{KozMazRoss}) that
if $\lambda$ is not an eigenvalue of $\widehat{\mathcal{L}}$, then
operator $\widehat{\mathcal{L}}(\lambda)$ is an isomorphism between
spaces $W^{2,2}(0,\pi/2)^2 \times W^{1,2}(0,\pi/2)$ and
$L^{2}(0,\pi/2)^2 \times W^{1,2}(0,\pi/2) \times \mathbb{C}^2 \times
\mathbb{C}^2$.

The main proposition of this section  (Theorem \ref{maintheorap}) is
based on Theorem \ref{theorem pom 2}. To prove Theorem \ref{theorem
pom 2} we will apply the following theorem which is the simplified
version of Theorems 1.4.3 and 1.4.4 in \cite{KozMazRoss2001}.

\begin{theorem}[Regularity and a priori
estimate]\label{theorem_kozmazross} Let $(\overline{\bfvartheta},\overline{q})\in
W^{1,2}(\Omega)^2\times L^2(\Omega)$ be the weak solution of some
generalized steady Stokes systems with a right hand side
$\overline{\bfsigma}=(\overline{\sigma_1},\overline{\sigma_2},\overline{\sigma_3})\in
L^p(\om)^2\times W^{1,p}(\Omega)$, $p>1$, $A\in\partial\om$. Denote by
$\widehat{\mathcal{B}} = \widehat{\mathcal{B}}(\lambda)$ its
corresponding pencil operator. Then the following propositions hold:
\begin{itemize}
\item  Assume that $\lambda_0$ is the only eigenvalue of
$\widehat{\mathcal{B}}(\lambda)$ in the strip ${\rm Im}\, \lambda\in
(2/p-2,0)$. Suppose additionally that this eigenvalue is simple.
Assume that the lines ${\rm Im}\, \lambda=0$ and ${\rm Im}\,
\lambda=2/p-2$ are free of eigenvalues of the pencil operator
$\widehat{\mathcal{B}}(\lambda)$. Then there exists a cut-off
function $\eta = \eta(r)$ and $\delta>0$ (see (\ref{cut_of_function})) such that
$(\overline{\bfvartheta},\overline{q}) =
(\overline{\bfvartheta}(r,\omega),\overline{q}(r,\omega))$
admits in a neighborhood $\mathcal{O}$ of the
corner point $A$ the asymptotic representation
\begin{equation}\label{asymptotic_expansion}
\eta(r)\left(
\begin{array}{r}
\overline{\bfvartheta}\\
\overline{q}
\end{array}\right)
=c \left(
\begin{array}{r}
\overline{\bfvartheta}_{sing}
\\
\overline{q}_{sing}
\end{array}\right)  + \left(
\begin{array}{r}
\overline{\bfvartheta}_{reg}
\\
\overline{q}_{reg}
\end{array}\right),
\end{equation}
where
$\Bigl(\overline{\bfvartheta}_{reg},\overline{q}_{reg}\Bigr)
\in W^{2,p}(\om_\delta)^2\times W^{1,p}(\om_\delta)$
and $\om_\delta = U_{\delta}(A)\cap\Omega$. Constant $c$ is called
generalized intensity factor and the corresponding singular function
is given by
\begin{equation}\label{vector function}
\left(
\begin{array}{r}
\overline{\bfvartheta}_{sing}
\\
\overline{q}_{sing}
\end{array}\right) = r^{i \lambda}
\left(
\begin{array}{r}
\dot{\bfvartheta}
\\
r^{-1}\dot{q}
\end{array}\right),  \nonumber
\end{equation}
where $(\dot{\bfvartheta},\dot{q}) = (\dot{\bfvartheta}(\omega),\dot{q}(\omega))$ is the
corresponding eigenfunction of $\widehat{\mathcal{B}}(\lambda_0)$.
\item Suppose that the line ${\rm Im}\,
\lambda = 2/p-2$ does not contain eigenvalues of the pencil operator
$\widehat{\mathcal{L}}(\lambda)$ and
 $(\overline{\bfvartheta},\overline{q})\in W^{2,p}(\Omega_\delta)^2\times W^{1,p}(\Omega_\delta)$.
Then
\begin{equation}\label{estimate_general}
\|\overline{\bfvartheta}\|_{W^{2,p}(\om_A)^2}+
\|\overline{q}\|_{W^{1,p}(\om_A)}\leq \cn11
\,\|\overline{\bfsigma}\|_{L^{p}(\Omega)^2},
\end{equation}
where $\om_A = U_{\tau}(A)\cap\Omega$ for some $\tau<\delta/2$ and
$\cc11 = \cc11(\om_\delta,\tau)$.
\end{itemize}
%
\end{theorem}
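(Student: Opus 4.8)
The statement is essentially not new: it is the specialisation, to the situation at hand, of the general regularity theory of Kozlov, Maz'ya and Rossmann for elliptic boundary value problems in domains with conic points, namely \cite[Theorems~1.4.3 and~1.4.4]{KozMazRoss2001}. The plan is therefore \emph{not} to reprove that machinery, but to check that the generalised steady Stokes system with mixed Dirichlet--Neumann boundary conditions falls within its scope at the corner point $A$, and then to read off \eqref{asymptotic_expansion} and \eqref{estimate_general} by identifying the weighted Sobolev spaces used there with the plain Sobolev spaces used here.

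First I would invoke the reduction already described above: localise at $A$ by the cut-off $\eta$ from \eqref{cut_of_function}, pass to polar coordinates $(r,\omega)$ and the logarithmic variable $\xi=\log r$ to obtain the model problem on the strip $\tilde S$, and apply the complex Fourier transform in $\xi$; this yields the $\lambda$-parameter family of ordinary differential systems on $(0,\pi/2)$ encoded in the pencil $\widehat{\mathcal{L}}(\lambda)$. The point to be verified is that \eqref{stationary S 2D}--\eqref{neumann S} is an Agmon--Douglis--Nirenberg elliptic boundary value problem and that both boundary operators --- the Dirichlet operator on $\Gamma_D$ and the traction-type operator $-q\bfn+\partial\bfvartheta/\partial\bfn$ on $\Gamma_N$ --- satisfy the complementing (Shapiro--Lopatinskii) condition; both facts are classical for the Stokes system, and, as already noted, for $\lambda$ off the spectrum of $\widehat{\mathcal{L}}$ the operator $\widehat{\mathcal{L}}(\lambda)$ is an isomorphism of $W^{2,2}(0,\pi/2)^2\times W^{1,2}(0,\pi/2)$ onto $L^2(0,\pi/2)^2\times W^{1,2}(0,\pi/2)\times\mathbb{C}^2\times\mathbb{C}^2$.

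Next I would carry out the weighted-space bookkeeping. Near $A$ the weak solution $(\bfvartheta,q)\in W^{1,2}(\om)^2\times L^2(\om)$ lies in the Kondrat'ev space attached to the line ${\rm Im}\,\lambda=0$, while $W^{2,p}$-regularity near a planar corner corresponds to the line ${\rm Im}\,\lambda=2/p-2$, the order gap between velocity and pressure being absorbed into the factor $r^{-1}$ in the singular vector. Then \cite[Theorem~1.4.3]{KozMazRoss2001} gives that the two weighted solutions differ by a finite sum of singular terms $r^{i\lambda_j}(\dot{\bfvartheta}_j,r^{-1}\dot q_j)$ over the eigenvalues $\lambda_j$ of $\widehat{\mathcal{L}}$ in the open strip between these two lines, with a remainder in the smaller (target) space; under the hypothesis that $\lambda_0$ is the only such eigenvalue and that it is simple, this collapses to a single term with a scalar coefficient $c$ and a remainder in $W^{2,p}(\om_\delta)^2\times W^{1,p}(\om_\delta)$, which is precisely \eqref{asymptotic_expansion}. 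Likewise \cite[Theorem~1.4.4]{KozMazRoss2001}, applied in the regular range where the line ${\rm Im}\,\lambda=2/p-2$ carries no eigenvalue (so that the associated weighted problem is an isomorphism), combined with the interior and boundary estimates established earlier in this appendix, yields \eqref{estimate_general}.

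The one genuinely technical point --- and the place where care is needed --- is to verify that the \emph{mixed} Dirichlet--Neumann Stokes problem is a \emph{regular} ADN-elliptic problem precisely at the point $A$ where the type of the boundary condition changes (that is, to examine the Lopatinskii--Shapiro condition there), and to translate the scale of weighted spaces $V^{l,p}_\beta$ used in \cite{KozMazRoss2001} into the ordinary Sobolev scale $W^{l,p}$, keeping the pressure one order below the velocity. Everything else is a faithful transcription of the cited theorems; the eigenvalue hypotheses are assumptions of the present theorem and will be checked, for the pencil of the $\pi/2$ mixed problem, in the next subsection.
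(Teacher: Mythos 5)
Your proposal takes the same route as the paper: the paper offers no proof of this statement at all, presenting it explicitly as ``the simplified version of Theorems 1.4.3 and 1.4.4 in \cite{KozMazRoss2001}'' and relying on the subsequent remark that the mixed Stokes problem is ADN--elliptic so that those theorems apply. Your plan of checking the Lopatinskii--Shapiro condition at the corner and translating the Kondrat'ev weighted scale into the ordinary Sobolev scale is exactly the justification the paper leaves implicit, so it is consistent with (and somewhat more careful than) the paper's treatment.
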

\begin{remark}
Since (\ref{stationary S 2D})--(\ref{neumann S}) represents an
elliptic boundary value problem in the sense of Agmon, Douglis and
Nirenberg, $\overline{\bfvartheta_1} = \bfvartheta_1$,
$\overline{\bfvartheta_2} = \bfvartheta_2$ and $\overline{q} = q$, we can
apply the previous theorem for our problem.
\end{remark}

\begin{remark}\label{remark_mellin_fourier} Note that we use
Fourier transform instead of Mellin transform used in
\cite{KozMazRoss,KozMazRoss2001,OrltSan}. Consequently, we study the
existence of eigenvalues in the strip ${\rm Im}\, \lambda\in
(-1-\varepsilon,0)$ instead of ${\rm Re}\, \lambda \in
(0,1+\varepsilon)$ for sufficiently small $\varepsilon$. (For
detailed theory of boundary value problems in nonsmooth domains
based on Fourier technique see \cite{Kon,KonOle,KufSan}.)
\end{remark}

We will show (see Remark \ref{roots}) that  only the simple
eigenvalue $\lambda_0=-{\rm i}$ is situated in the strip ${\rm
Im}\lambda\in [-1-\varepsilon,0)$ choosing $\varepsilon>0$
sufficiently small.


\subsubsection{The pencil operator}  Our aim in this subsection is
to derive the pencil operator for our problem. Consider the weak
solution $(\bfvartheta,q)$ of (\ref{stationary S 2D})--(\ref{neumann
S}). Suppose additionally (only in this subsection) that
$(\bfvartheta,q) \in W^{2,2}(\Omega)^2 \times W^{1,2}(\Omega)$.
Choose the origin $O$ at the point $A$ with an angle $\pi/2$ and
multiply the equations (\ref{stationary S 2D})--(\ref{rovnice
kontinuity S}) by the ``cut off function'' $\eta$. Remind $\bfw=\eta
\bfvartheta$ and $Q=\eta q$. Further, denote by $\mathcal{K}$ an
infinite angle with the vertex $O\equiv A$ and size ${\pi / 2}$.
Then we have
\begin{eqnarray}
- \Delta \bfw + \nabla Q &=& \bfg \quad \textmd{in} \quad
\mathcal{K},\label{eq20a}\\
\nabla \cdot \bfw &=& h  \quad \textmd{in} \quad
\mathcal{K},\label{eq20b}
\end{eqnarray}
where
\begin{equation}\label{eq20c}
\bfg =-\bfvartheta\Delta \eta-2\frac{\partial \bfvartheta}{\partial
x_1}\frac{\partial \eta}{\partial x_1}-2 \frac{\partial
\bfvartheta}{\partial x_2}\frac{\partial \eta}{\partial
x_2}+\bfsigma \eta + (\nabla \eta)q,\,\,\,\,\,\, h=\bfvartheta \cdot
(\nabla \eta)
\end{equation}
and $\bfg \in L^2(\mathcal{K})^2$, $h \in W^{1,2}(\mathcal{K})$,
$\bfw \in W^{2,2}(\mathcal{K})^2$, $Q \in W^{1,2}(\mathcal{K})$. The
behavior of $\bfw=\eta\bfvartheta$ and $Q=\eta q$ near $O$
characterizes the regularity of $\bfu$ and $q$ in a neighborhood of
the point $A$.

Under the polar coordinates $(r,\omega)$ the Stokes problem
(\ref{eq20a})--(\ref{eq20b}) becomes
\begin{eqnarray}
- \left( \frac{\partial ^2 \bar{w}_1}{\partial r^2} + \frac{1}{r}
\frac{\partial \bar{w}_1}{\partial r} +  \frac {1}{r^2} \frac
{\partial ^2 \bar{w}_1}{\partial \omega ^2} \right) + \frac{\partial
\bar{Q} }{\partial r}\cos{\omega} -\frac{1}{r}\frac{\partial
\bar{Q}}{\partial \omega} \sin \omega &=& \bar{g}_1 (r,\omega)
,\label{eq20e}\\
- \left( \frac{\partial ^2 \bar{w}_2}{\partial r^2} + \frac{1}{r}
\frac{\partial \bar{w}_2}{\partial r} +  \frac {1}{r^2} \frac
{\partial ^2 \bar{w}_2}{\partial \omega ^2} \right) + \frac{\partial
\bar{Q} }{\partial r}\sin{\omega} + \frac{1}{r}\frac{\partial
\bar{Q}}{\partial \omega} \cos \omega &=& \bar{g}_2 (r,\omega)
,\label{eq20f}\\
\frac{\partial \bar{w}_1}{\partial r}  \cos \omega - \frac{1}{r}
\frac {\partial \bar{w}_1}{\partial \omega} \sin \omega +
\frac{\partial \bar{w}_2}{\partial r}\sin \omega + \frac{1}{r} \frac
{\partial \bar{w}_2}{\partial \omega} \cos \omega
&=&\bar{h}(r,\omega)\label{eq20g}
\end{eqnarray}
that holds in $\bar{S}$, where $\bar{S}=\left\{ (r,\omega):
0<r<\infty, \, 0<\omega<{\pi / 2} \right\}$ is the infinite angle
described in polar coordinates $(r,\omega)$ (see Fig.
\ref{finite_strip}), $\bar{\bfw}(r,\omega)=\bfw(x_1,x_2)$,
$\bar{Q}(r,\omega)=Q(x_1,x_2)$,
$\bar{\bfg}(r,\omega)=\bfg(x_1,x_2)$,
$\bar{h}(r,\omega)=h(x_1,x_2)$.

\begin{figure}[h]
\begin{minipage}[b]{.5\textwidth}
\includegraphics[width=5.9cm]{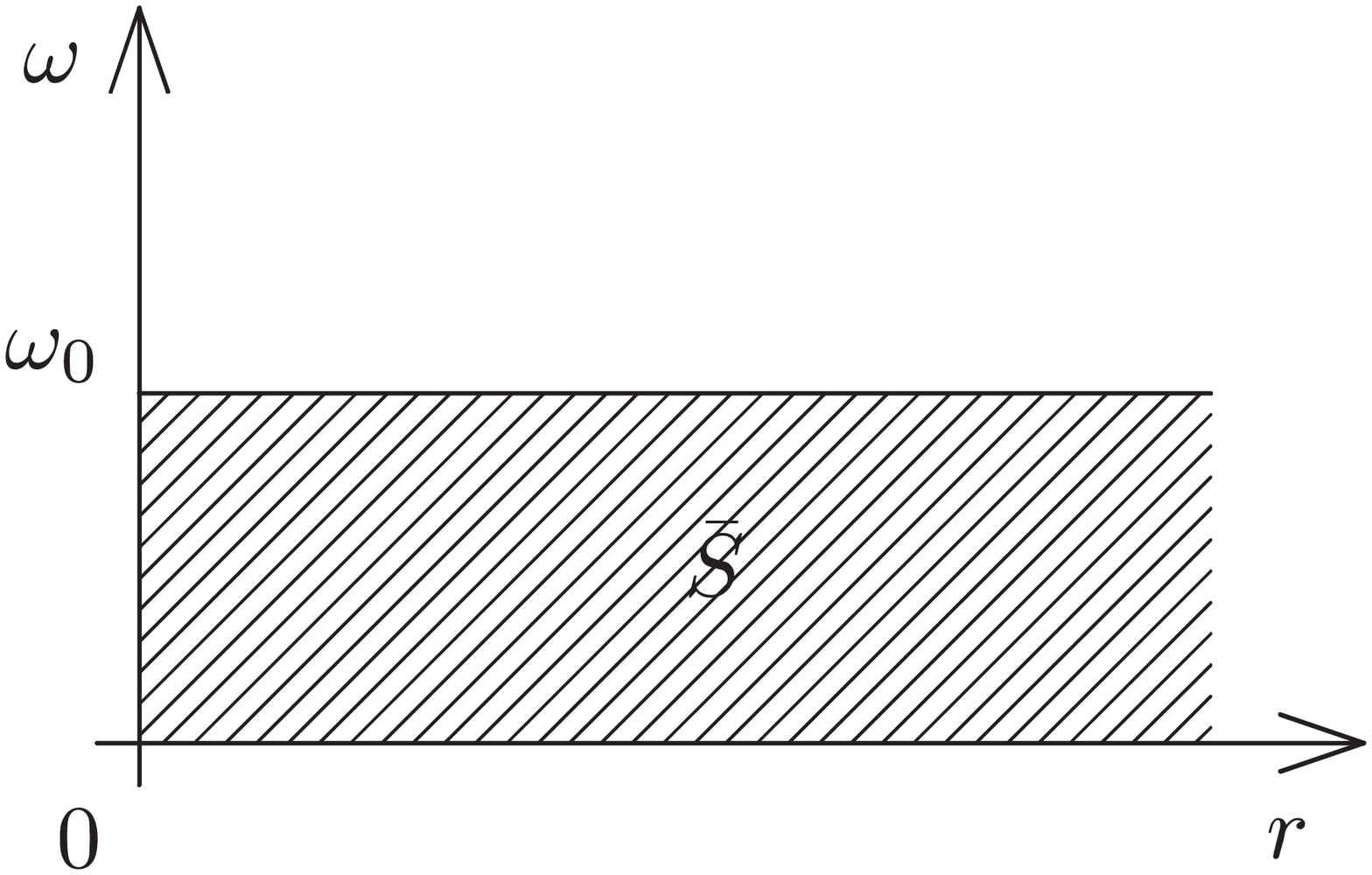}
\caption{The infinite half--strip $\bar{S}$.}\label{finite_strip}
\end{minipage}
\begin{minipage}[b]{.6\textwidth}
\includegraphics[width=7.5cm]{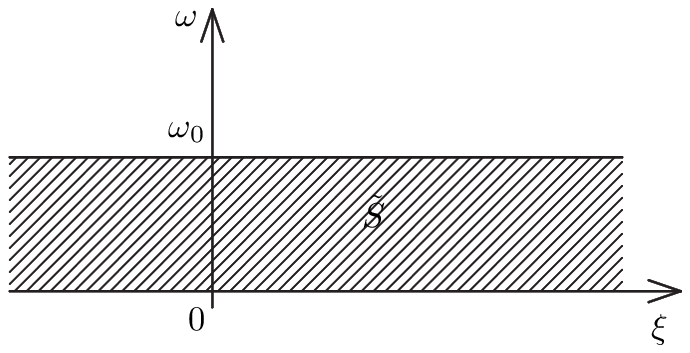}
\caption{The infinite strip $\tilde{S}$.}\label{infinite_strip}
\end{minipage}
\end{figure}

Using the substitution $r=e^\xi$ we get
\begin{eqnarray}
- \left( \frac{\partial ^2 \tilde{w}_1}{\partial \xi^2} + \frac
{\partial ^2 \tilde{w}_1}{\partial \omega ^2} \right) + \left(
\frac{\partial \tilde{Q}}{\partial \xi} - \tilde{Q}\right)
\cos{\omega} -\frac{\partial\tilde{Q}}{\partial \omega} \sin \omega
&=&\widetilde{G}_1(\xi,\omega) ,\label{eq21a}\\
- \left( \frac{\partial ^2 \tilde{w}_2}{\partial \xi^2} + \frac
{\partial ^2 \tilde{w}_2}{\partial \omega ^2} \right) + \left(
\frac{\partial\tilde{Q}}{\partial \xi}-\tilde{Q}\right) \sin{\omega}
+ \frac{\partial\tilde{Q}}{\partial \omega}
\cos\omega &=&\widetilde{G}_2(\xi,\omega) ,\label{eq21b} \\
\frac{\partial \tilde{w}_1}{\partial \xi}  \cos \omega - \frac
{\partial \tilde{w}_1}{\partial \omega} \sin \omega + \frac{\partial
\tilde{w}_2}{\partial \xi} \sin \omega + \frac {\partial
\tilde{w}_2}{\partial \omega} \cos \omega &=&
\widetilde{H}(\xi,\omega) \label{eq21c}
\end{eqnarray}
that holds in $\tilde{S}=\left\{ (\xi,\omega): \xi \in \mathbb{R},
\, 0<\omega<{\pi / 2}  \right\}$ (see Fig. \ref{infinite_strip}),
$\tilde{\bfw}(\xi,\omega)=\bfw(x_1,x_2)$,
$\tilde{Q}(\xi,\omega)=e^{\xi}Q(x_1,x_2)$,
$\tilde{\bfg}(\xi,\omega)=\bfg(x_1,x_2)$,
$\tilde{h}(\xi,\omega)=h(x_1,x_2)$, $\widetilde{\bfG}(\xi,\omega) =
e^{2\xi}\tilde{\bfg}(\xi,\omega)$, $\widetilde{H}(\xi,\omega) =
e^{\xi}\tilde{h}(\xi,\omega)$. Note that $\tilde{\bfw}\in
W^{2,2}({\tilde{S}})^2$, $\tilde{Q} \in W^{1,2}({\tilde{S}})$,
$\widetilde{\bfG} \in L^2({\tilde{S}})^2$, $\widetilde{H}\in
W^{1,2}({\tilde{S}})$. Applying complex Fourier transform with
respect to $\xi$ for suitable $\lambda\in\mathbb{C}$, we get the
following system of three ordinary differential equations depending
on a parameter $\lambda$ with unknown functions $\widehat{w}_1$,
$\widehat{w}_2$ and $\widehat{{Q}}$
\begin{eqnarray}
- \frac {\partial ^2 \widehat{w}_1}{\partial \omega ^2} - (
\textrm{i} \lambda)^2(\widehat{w}_1)+(\textrm{i}\lambda-1
)\widehat{{Q}}\cos \omega-\frac{\partial \widehat{{Q}}}{\partial
\omega} \sin \omega
&=&\widehat{G}_1(\lambda,\omega),\label{eq22a}\\
- \frac {\partial ^2\widehat{w}_2}{\partial \omega ^2}-(\textrm{i}
\lambda)^2(\widehat{w}_2)+(\textrm{i}\lambda-1)\widehat{{Q}}\sin\omega
+\frac{\partial\widehat{Q} }{\partial \omega} \cos \omega &=&
\widehat{G}_2(\lambda,\omega),\label{eq22b}\\
( \textrm{i}\lambda)(\widehat{w}_{1})\cos \omega - \frac {\partial
\widehat{w}_1}{\partial \omega} \sin \omega + ( \textrm{i} \lambda )
(\widehat{w}_2)\sin \omega+\frac{\partial\widehat{w}_2 }{\partial
\omega} \cos \omega&=& \widehat{H}(\lambda,\omega) \label{eq22c}
\end{eqnarray}
that holds in the interval $(0 , \pi/2)$, $\widehat{\bfG}=
\mathcal{F}_{\xi\rightarrow \lambda}(\widetilde{\bfG})$,
 $\widehat{H}=\mathcal{F}_{\xi\rightarrow
\lambda}(\widetilde{H})$,
$\widehat{\bfw}=\mathcal{F}_{\xi\rightarrow \lambda}(\tilde{\bfw})$,
$\widehat{Q}=\mathcal{F}_{\xi\rightarrow \lambda}(\tilde{Q})$.  Note
that for the complex parameter $\lambda$ we have $\widehat{\bfw}\in
W^{2,2}((0;\pi/2))^2$, $\widehat{Q} \in W^{1,2}((0;\pi/2))$,
$\widehat{\bfG} \in L^2((0; \pi/2))^2$, $\widehat{H}\in W^{1,2}((0;
\pi/2))$.

\medskip

Denote by $\mathcal{A}(\lambda):W^{2,2}((0; \pi/2))^2\times
W^{1,2}((0; \pi/2))\to L^2((0; \pi/2))^2\times W^{1,2}((0; \pi/2))$
the matrix operator which corresponds to system
(\ref{eq22a})--(\ref{eq22c}) , i.e.
\begin{equation}\label{eq24a}
\mathcal{A}(\lambda) = \left(
\begin{array}{ccc}
-\frac{\partial}{\partial \omega^2}-(i\lambda)^2&0&(i\lambda-1)\cos\omega-\sin\omega \frac{\partial}{\partial\omega}\\
0&-\frac{\partial}{\partial \omega^2}-(i\lambda)^2&(i\lambda-1)\sin\omega+\cos\omega \frac{\partial}{\partial\omega}\\
(i\lambda)\cos\omega-\sin\omega\frac{\partial}{\partial\omega}&(i\lambda)\sin\omega+\cos\omega\frac{\partial}{\partial\omega}
&0\\
\end{array}\right).
\end{equation}
We considered this operator for all parameter
$\lambda\in\mathbb{C}$.

\medskip

\subsubsection*{The mixed boundary conditions}
``Localizing'' the problem, introducing polar coordinates
$(r,\omega)$ and substituting $r=e^{\xi}$ we get the mixed boundary
conditions \eqref{neumann S} at the point $\omega=0$ and $\omega =
{\pi / 2}$
\begin{eqnarray}
\frac{\partial \tilde{w}_1}{\partial
\omega} (\xi,0)&=& 0, \label{neumann SSm3}\\
-(\tilde{Q}e^{\xi})(\xi,0)+\frac{\partial \tilde{w}_2}{\partial
\omega}(\xi,0)&=& 0, \label{neumann SSm4}\\
\tilde{w}_1(\xi,\pi/2)&=&0, \label{dirichlet SSmx}\\
\tilde{w}_2(\xi,\pi/2)&=&0 \label{dirichlet SSm}
\end{eqnarray}
and using the Fourier transform with respect to $\xi$,
\eqref{neumann SSm3}--\eqref{dirichlet SSm} read
\begin{eqnarray}
 \frac {\partial \widehat{w}_1
}{\partial \omega}  (\lambda,0 )&=&0 ,\label{eq25a}\\
 \frac {\partial  \widehat{w_2}}
{\partial \omega}(\lambda,0)-\widehat{{Q}}
(\lambda, 0)&=&0,\label{eq25b}\\
\widehat{w}_1(\lambda,\pi/2) &=& 0,\label{eq25c}\\
\widehat{w}_2(\lambda,\pi/2) &=& 0.\label{eq25d}
\end{eqnarray}
Denote by $\mathcal{B}_{DN,1}(\lambda)$ the operator of the boundary
conditions of mixed type (\ref{eq25a})--(\ref{eq25d}) written in the
matrix form for $\omega=0$ (Neumann type condition)
\begin{eqnarray}\label{BDN1}
\mathcal{B}_{DN,1}(\lambda)=\left(
\begin{array}{ccc}
\underline{\frac{\partial}{\partial \omega}}\big|_{0}&0& 0\\
0&\underline{\frac{\partial}{\partial \omega}}\big|_{0}&-\underline{1}\big|_{0}\\
\end{array}\right)
\end{eqnarray}
and $\mathcal{B}_{DN,2}(\lambda)$ for $\omega=\pi/2$ (Dirichlet
condition)
\begin{eqnarray}\label{BDN2}
\mathcal{B}_{DN,2}(\lambda)= \left(
\begin{array}{ccc}
\underline{1}\big|_{\frac{\pi}{2}}&0& 0\\
0&\underline{1}\big|_{\frac{\pi}{2}}&0\\
\end{array}\right).
\end{eqnarray}
Remind that $\widehat{\mathcal{L}}(\lambda)$ is the parameter
dependent operator which is defined by
\begin{eqnarray}\label{operL}
\widehat{\mathcal{L}}(\lambda)=\left[\mathcal{A} (\lambda);\,
\mathcal{B}_{1}(\lambda);\,\mathcal{B}_{2}(\lambda)\right].
\end{eqnarray}
$\widehat{\mathcal{L}}(\lambda)$ is considered for all
$\lambda\in\mathbb{C}$ and it corresponds to the problem
(\ref{eq22a})--(\ref{eq22c}) with the boundary conditions
(\ref{eq25a})--(\ref{eq25d}).
\begin{equation}\label{zobrazeniDN}
\widehat{\mathcal{L}}(\lambda): W^{2,2}((0; \pi/2))^2 \times
W^{1,2}((0; \pi/2))
 \rightarrow L^{2}((0; \pi/2))^2 \times W^{1,2}((0; \pi/2)) \times \mathbb{C}^2
\times \mathbb{C}^2.
\end{equation}

\subsubsection{Calculation of the characteristic determinants to the
Stokes flows and a regularity result for the stationary Stokes
problem}

Denote by $[\hat{e}_1;\,\hat{e}_2;\,\hat{e}_p]$ the general solution
of the system (\ref{eq22a})--(\ref{eq22c}) with the vanishing right
hand side, where $\hat{e}_1$, $\hat{e}_2$ stand for $\widehat{w}_1$,
$\widehat{w}_2$ and $\hat{e}_p$ stands for $\widehat{{Q}}$,
respectively. The general solution
$[\hat{e}_1;\,\hat{e}_2;\,\hat{e}_p]$ has the form
\begin{multline}\label{general_solution_1}
\left(\!
\begin{array}{c}
\hat{e}_1\\
\hat{e}_2\\
\hat{e}_q\\
\end{array}\!\right) =
C_1 \left(
\begin{array}{c}
\cos(i \lambda\omega)\\
-\sin(i \lambda \omega)\\
0\\
\end{array}\right)+
C_2 \left(
\begin{array}{c}
\sin(i \lambda\omega)\\
\cos(i \lambda \omega)\\
0\\
\end{array}\right) \\+
C_3 \left(
\begin{array}{c}
-\frac{i\lambda}{2}\cos[({i}\lambda-2)\omega]\\
\sin({i} \lambda \omega)+\frac{{i}\lambda}{2}\sin[({i}\lambda-2)\omega]\\
-2{i}\lambda\cos[({i}\lambda-1)\omega]\\
\end{array}\right)
+C_4 \left(
\begin{array}{c}
\frac{{i}\lambda}{2}\sin[({i}\lambda-2)\omega]\\
\cos({i} \lambda \omega)+\frac{{i}\lambda}{2}\cos[({i}\lambda-2)\omega]\\
2{i}\lambda\sin[({i}\lambda-1)\omega]\\
\end{array}\right)
\end{multline}
for $\lambda \neq 0$ and
\begin{multline}\label{general_solution_2}
\left(\!
\begin{array}{c}
\hat{e}_1\\
\hat{e}_2\\
\hat{e}_q\\
\end{array}\!\right) =
C_1 \left(\!
\begin{array}{c}
\cos(2\omega)\\
\sin(2\omega)-2\omega\\
4\cos\omega\\
\end{array}\!\right)+
C_2 \left(\!
\begin{array}{c}
-\sin(2\omega)-2\omega\\
\cos(2\omega)\\
-4\sin\omega\\
\end{array}\!\right) +
C_3 \left(\!
\begin{array}{c}
1\\
0\\
0\\
\end{array}\!\right)
+C_4 \left(\!
\begin{array}{c}
0\\
1\\
0\\
\end{array}\!\right)
\end{multline}
for $\lambda = 0$. Remark that $\lambda = 0$ is not an eigenvalue of
the pencil $\widehat{\mathcal{L}}(\lambda)$. Recall that every
$\lambda_0 \in \mathbb{C}$ such that ker
$\widehat{\mathcal{L}}(\lambda_0)\neq \left\{ \bf0 \right\}$ is said
to be an eigenvalue of $\widehat{\mathcal{L}}(\lambda)$. The
distribution of the eigenvalues of the operator
$\widehat{\mathcal{L}}(\lambda)$ plays crucial role in the
regularity results of the solution, see Theorem \ref{theorem pom 2}.

Substituting the general solution
(\ref{general_solution_1}) and (\ref{general_solution_2}) into the
corresponding boundary conditions \eqref{eq25a}--\eqref{eq25d}
 we get a linear system of four  homogenous
equations with unknowns $C_1$, $C_2$, $C_3$, $C_4$ and with parameter $\lambda$.
The eigenvalues of $\widehat{\mathcal{L}}(\lambda)$ are zeros of the
determinant $D(\lambda)$ of the matrix corresponding to the
system mentioned above. Omitting numerous technicalities
the resulting determinant $D(\lambda)$
reads as follows:

\begin{equation}\label{detDN}
D(\lambda)= \left| \begin{array}{cccc}
0&4-{i}\lambda&0&-2+{i}\lambda\\
\\
2+{i}\lambda&0&4+{i}\lambda&0\\
\\
d_{31}&d_{32}&d_{33}&d_{34}\\
\\
d_{41}&d_{42}&d_{43}&d_{44}
\end{array}\right|=0,
\end{equation}
where
\begin{displaymath}
\begin{array}{ll}
d_{31}=\cos( \frac{i\lambda\pi}{2})-
\frac{{i}\lambda}{2}\cos[({i}\lambda-2)\frac{\pi}{2}],&
d_{41}=\frac{{i} \lambda}{2} \sin[({i}\lambda-2) \frac{\pi}{2}],\\
\\
d_{32}=\sin( \frac{{i}\lambda\pi}{2})-
\frac{{i}\lambda}{2}\sin[({i}\lambda-2)\frac{\pi}{2}],&
d_{42}=-\frac{{i}
\lambda}{2}\cos[({i}\lambda-2)\frac{\pi}{2}],\\
\\
d_{33}=-\frac{{i} \lambda}{2} \cos[({i}\lambda-2)\frac{\pi}{2}],&
d_{43}=\sin( \frac{{i} \lambda\pi}{2})+
\frac{{i}\lambda}{2}\sin[({i}\lambda-2)\frac{\pi}{2}],\\
\\
d_{34}=\frac{{i} \lambda}{2} \sin[({i}\lambda-2)\frac{\pi}{2}],&
d_{44}=\cos( \frac{{i} \lambda\pi}{2}) +
\frac{{i}\lambda}{2}\cos[({i}\lambda-2)\frac{\pi}{2}].
\end{array}
\end{displaymath}
Computation of \eqref{detDN} leads to the transcendent equation
\begin{equation}\label{determinant DN}
({i}\lambda)^2  -
4\cos^2\left[({i}\lambda)\frac{\pi}{2}\right] - \sin^2
\left[({i}\lambda)\frac{\pi}{2}\right]=0.
\end{equation}
The roots of the equation (\ref{determinant DN})  are the eigenvalues of
$\widehat{\mathcal{L}}(\lambda)$.

\begin{remark}\label{roots}
We show that there exists $\varepsilon>0$ such that there are no eigenvalues  of
$\widehat{\mathcal{L}}(\lambda)$  situated in the strip ${\rm
Im}\lambda\in [-1-\varepsilon,0)$  with the exception of $\lambda = -{i}$.
It is easy to see that eigenvalue $\lambda = -{i}$ is simple.

Let us briefly present the technical procedure.
Let $\lambda = a + {i}b$, where $a$ and $b$ are real numbers.
Separating real and imaginary parts in
(\ref{determinant DN})
we get the following system of nonlinear equations
\begin{eqnarray}
(b^2-a^2) -\frac{5}{2}
&=&
\frac{3}{4}\cos(\pi b)\left( e^{\pi a}+e^{- \pi a} \right),
\label{reg stok dn 1}
\\
-2ab
&=&
\frac{3}{4} \sin(\pi b)\left(e^{\pi a}-e^{-\pi a}\right).
\label{reg stok dn}
\end{eqnarray}
The equation (\ref{reg stok dn}) can be simply modified to a more convenient form
\begin{equation}\label{pom20}
\frac{- \pi a}{ e^{\pi a}-e^{- \pi a}}
\left[\frac{2}{\pi} \right]^2
=
\frac{3}{2}
\frac{\sin(\pi b)}{\pi b},
\quad
a\neq0, \, b\neq 0.
\end{equation}
It is easy to see that the expression on the left is negative for every
$a \in \mathbb{R}$, $a\neq 0$, and the expression on the right is nonnegative
for every $b \in [-1,0)$. Consequently, corresponding $\lambda$ are not roots of (\ref{determinant DN}).

For $\lambda = {i}\,b$ the left hand side
of (\ref{determinant DN}) takes the form $b^2-1-3\cos^2(\frac{\pi\,b}{2})$. This expression is
negative for $b\in(-1,0)$. Therefore, $\lambda = {i}\,b$, where $b\in(-1,0)$, are not roots
of (\ref{determinant DN}).

The equation (\ref{reg stok dn 1}) can be written in the form
\begin{equation}\label{pom21}
\frac{(b^2-a^2)}{\left( e^{\pi a}+e^{- \pi a} \right)}
-\frac{5}{2 \left( e^{\pi a}+e^{- \pi a} \right)}
=
\frac{3}{4}\cos(\pi b).
\end{equation}
It is easy to see that there exist $\varepsilon_1>0$ and  $K>0$ such that
(\ref{pom21}) does not hold for  $|a|>K$ and
$b\in(-1-\varepsilon_1,-1)$. Hence, corresponding $\lambda$ are not roots of (\ref{determinant DN}).

Roots if (\ref{determinant DN}) are isolated points since the left hand side
corresponds to a nonzero holomorphic function defined on the whole C. Consequently,
$\lambda = a + {i}\,b$, where $|a|\leq K$ and $b\in(-1-\varepsilon,-1)$ for sufficiently
small $\varepsilon>0$, $0<\varepsilon<\varepsilon_1$,  are not roots of (\ref{determinant DN}).

All of these facts imply that there are no eigenvalues  of
$\widehat{\mathcal{L}}(\lambda)$  situated in the strip ${\rm
Im}\lambda\in [-1-\varepsilon,0)$  with the exception of $\lambda = -{i}$.
\end{remark}

Let $\varepsilon$ mentioned in Remark \ref{roots} be fixed. .
Let $(\bfvartheta,q)$ be a weak solution of
\eqref{stationary S 2D}--\eqref{neumann S} with the right hand side $\bfsigma$.
Suppose additionally $\bfsigma\in L^{2+\varepsilon}(\om)^2$.The cut-off
function $\eta(r)$ and number $\delta$ was defined in \eqref{cut_of_function}.
Let $\om_\delta=\om\cap U_\delta(A)$, $\tau<\delta/2$ be fixed
and $\om_A=\om\cap U_\tau(A)$. Theorem
\ref{theorem_kozmazross} yields the following asymptotic
representation in a neighborhood $\mathcal{O}$ of the corner point
$A$
\begin{multline}\label{general_solution_11}
\eta(r)\left(
\begin{array}{r}
\bfvartheta\\
q
\end{array}\right)
= \left[ c_1 \left(
\begin{array}{c}
r\cos\omega\\
-r\sin\omega\\
0\\
\end{array}\right) +
c_2 \left(
\begin{array}{c}
r\sin\omega\\
r\cos\omega\\
0\\
\end{array}\right)
\right. \\ \left.
+ c_3 \left(
\begin{array}{c}
-r\cos\omega\\
r\sin\omega\\
-4\\
\end{array}\right)
+c_4 \left(
\begin{array}{c}
-r\sin\omega\\
3r\cos\omega\\
0\\
\end{array}\right)
\right] + \left(
\begin{array}{r}
\bfvartheta_{reg}
\\
q_{reg}
\end{array}\right)
\end{multline}
with some constants $c_1$, $c_2$, $c_3$ and $c_4$, where
$(\bfvartheta_{reg},q_{reg})
\in W^{2,2+\varepsilon}(\om_\delta)^2\times
W^{1,2+\varepsilon}(\om_\delta)$. Now \eqref{general_solution_11}
immediately yields $\eta(r)\,(\bfvartheta,q) \in
W^{2,2+\varepsilon}(\om_\delta)^2\times
W^{1,2+\varepsilon}(\om_\delta)$. Since the line ${\rm Im}\lambda =
-1-\varepsilon$ is free of eigenvalues of the pencil operator
$\widehat{\mathcal{L}}(\lambda)$ then
\begin{equation}\label{est_lin_stokes_+eps}
\|\bfvartheta\|_{W^{2,2+\varepsilon}(\om_A)^2} +
\|q\|_{W^{1,2+\varepsilon}(\om_A)}\; \leq \cn09 \,
\|\bfsigma\|_{L^{2+\varepsilon}(\Omega)^2},
\end{equation}
where $\tau\cc09 = \cc09(\om_A)$.

Since $(\bfvartheta,q)\in W^{2,2-\varepsilon}(\om_\delta)^2\times
W^{1,2-\varepsilon}(\om_\delta)$ and the strip ${\rm
Im}\lambda\in [-1+\varepsilon,0)$ is free of eigenvalues of the
pencil operator $\widehat{\mathcal{L}}(\lambda)$ then
\begin{equation}\label{est_lin_stokes_-eps}
\|\bfvartheta\|_{W^{2,2-\varepsilon}(\om_A)^2} +
\|q\|_{W^{1,2-\varepsilon}(\om_A)}\; \leq \cn10 \,
\|\bfsigma\|_{L^{2-\varepsilon}(\Omega)^2},
\end{equation}
where $\cc10 = \cc10(\om_A)$.

Let $T_i$, $i = 1,2$, be operators such that
$$T_i(\bfsigma)\,:=\Bigl(\,\bfvartheta,
\frac{\partial\bfvartheta}{\partial x_1},
\frac{\partial\bfvartheta}{\partial x_2},
\frac{\partial^2\bfvartheta}{\partial{x_1}^2},
\frac{\partial^2\bfvartheta}{\partial {x_1}\partial{2_2}},
\frac{\partial^2\bfvartheta}{\partial {x_2}^2},
\frac{\partial q}{\partial x_1},
\frac{\partial q}{\partial x_2}\Bigr)$$
and $T_1$ and $T_2$, respectively, are defined on $L^{2-\varepsilon}(\Omega)$
and $L^{2+\varepsilon}(\Omega)$.
By (\ref{est_lin_stokes_+eps}) and
(\ref{est_lin_stokes_-eps})
$T_1:L^{2-\varepsilon}(\Omega)\to L^{2-\varepsilon}(\om_A)^6$,
$T_2:L^{2+\varepsilon}(\Omega)\to L^{2+\varepsilon}(\om_A)^6$
and both of them are continuous. Interpolating between them
(see \cite[Theorem 2.4]{Du}) we obtain the following theorem.

\begin{theorem}\label{theorem pom 2}
Let $\bfsigma \in L^2(\Omega)^2$ and $(\bfvartheta,q)\in
V_{\kappa}\times L^2(\Omega)$ be the solution of
\eqref{stationary S 2D}--\eqref{neumann S}. Let $A\in\partial\Omega$
be the boundary point where the boundary conditions change their
type. Then there exists $\om_A = U_\tau(A)\cap\om$ for
sufficiently small $\tau$ such that $(\bfvartheta,q)\in
W^{2,2}(\om_A)^2\times W^{1,2}(\om_A)$ and the
estimate \phantom{$\cn06$}
\begin{equation}\label{est_lin_stokes}
\|\bfvartheta\|_{W^{2,2}(\om_A)^2} +
\|q\|_{W^{1,2}(\om_A)}\; \leq \cc06 \,
\|\bfsigma\|_{L^2(\Omega)^2}
\end{equation}
holds with $\cc06 = \cc06(\om_A)$.
\end{theorem}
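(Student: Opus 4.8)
The plan is to reduce everything to a spectral question about an operator pencil and then to recover the endpoint exponent $p=2$ by interpolation. The assertion is purely local at the corner point $A$: we must produce a small neighbourhood $\om_A=U_\tau(A)\cap\om$ on which $(\bfvartheta,q)\in W^{2,2}(\om_A)^2\times W^{1,2}(\om_A)$ together with the bound (\ref{est_lin_stokes}); the weak solvability and the bound (\ref{eq8b}) are already in hand, as are interior regularity and local regularity near the points of $\Gamma_D\cup\Gamma_N$.

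First I would carry out the Kondrat'ev--Kozlov--Maz'ya--Rossmann reduction sketched above: localize (\ref{stationary S 2D})--(\ref{neumann S}) near $A$ by the cut-off $\eta$ from (\ref{cut_of_function}), pass to polar coordinates $(r,\omega)$, substitute $r=e^{\xi}$, and apply the complex Fourier transform in $\xi$ to obtain the parameter-dependent pencil $\widehat{\mathcal{L}}(\lambda)$ of (\ref{operL}). By Theorem \ref{theorem_kozmazross}, the $W^{2,p}$-regularity of $(\bfvartheta,q)$ near $A$ and the associated a priori estimate are governed by the eigenvalues of $\widehat{\mathcal{L}}(\lambda)$ inside the horizontal strip $\mathrm{Im}\,\lambda\in(2/p-2,0)$, provided that strip's two bounding lines are free of eigenvalues. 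So the whole argument hinges on locating the eigenvalues.

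The central step, and the one I expect to be the main obstacle, is that spectral computation. Substituting the homogeneous general solution (\ref{general_solution_1})--(\ref{general_solution_2}) into the boundary conditions (\ref{eq25a})--(\ref{eq25d}) produces a homogeneous $4\times4$ linear system in $C_1,\dots,C_4$ whose determinant is $D(\lambda)$ of (\ref{detDN}); after simplification this becomes the transcendent equation (\ref{determinant DN}). Writing $\lambda=a+\mathrm{i}b$ and separating real and imaginary parts gives (\ref{reg stok dn 1})--(\ref{reg stok dn}); a sign analysis of the rearranged identity (\ref{pom20}), a direct check of the purely imaginary case $\lambda=\mathrm{i}b$, a large-$|a|$ estimate on (\ref{pom21}), and the fact that the holomorphic left-hand side of (\ref{determinant DN}) has isolated zeros together show that for every sufficiently small $\varepsilon>0$ the strip $\mathrm{Im}\,\lambda\in[-1-\varepsilon,0)$ contains exactly one eigenvalue, $\lambda_0=-\mathrm{i}$, that it is simple, and that the lines $\mathrm{Im}\,\lambda=0$, $\mathrm{Im}\,\lambda=-1+\varepsilon$ and $\mathrm{Im}\,\lambda=-1-\varepsilon$ carry no eigenvalues. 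This is the only genuinely problem-specific piece, and an error in $D(\lambda)$ would be fatal.

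With the spectrum pinned down I would apply Theorem \ref{theorem_kozmazross} twice. For $p=2-\varepsilon$ the strip $\mathrm{Im}\,\lambda\in(2/p-2,0)\subset(-1,0)$ is eigenvalue-free, so $(\bfvartheta,q)\in W^{2,2-\varepsilon}(\om_\delta)^2\times W^{1,2-\varepsilon}(\om_\delta)$ and the estimate (\ref{est_lin_stokes_-eps}) holds (here one uses $\bfsigma\in L^2(\Omega)^2\subset L^{2-\varepsilon}(\Omega)^2$, $\Omega$ being bounded). For $p=2+\varepsilon$ the strip $\mathrm{Im}\,\lambda\in(2/p-2,0)$ contains only the simple eigenvalue $-\mathrm{i}$, whose singular functions are, up to constants, the smooth polynomial vector fields appearing in (\ref{general_solution_11}) (since $r^{\mathrm{i}\lambda_0}=r$); hence the asymptotic representation forces $\eta(r)(\bfvartheta,q)\in W^{2,2+\varepsilon}(\om_\delta)^2\times W^{1,2+\varepsilon}(\om_\delta)$ and the estimate (\ref{est_lin_stokes_+eps}). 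Finally I would read (\ref{est_lin_stokes_-eps}) and (\ref{est_lin_stokes_+eps}) as boundedness of the linear maps $T_1:L^{2-\varepsilon}(\Omega)\to L^{2-\varepsilon}(\om_A)^6$ and $T_2:L^{2+\varepsilon}(\Omega)\to L^{2+\varepsilon}(\om_A)^6$ that send $\bfsigma$ to the array of first and second derivatives of $\bfvartheta$ and the first derivatives of $q$, note that the two maps agree on the overlap $L^{2+\varepsilon}(\Omega)$, and interpolate (\cite[Theorem 2.4]{Du}) to conclude that $T:L^2(\Omega)\to L^2(\om_A)^6$ is bounded. Boundedness of $T$ on $L^2(\Omega)^2$ is precisely the claimed membership $(\bfvartheta,q)\in W^{2,2}(\om_A)^2\times W^{1,2}(\om_A)$ with the estimate (\ref{est_lin_stokes}), completing the proof.
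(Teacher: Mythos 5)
Your proposal follows the paper's own argument essentially step for step: the same Kondrat'ev-type localization and Fourier reduction to the pencil $\widehat{\mathcal{L}}(\lambda)$, the same spectral computation isolating the simple eigenvalue $\lambda_0=-\mathrm{i}$ in the strip ${\rm Im}\,\lambda\in[-1-\varepsilon,0)$, the same two applications of Theorem \ref{theorem_kozmazross} at $p=2\pm\varepsilon$ (with the observation that the singular functions for $\lambda_0=-\mathrm{i}$ are smooth, so the $2+\varepsilon$ case still yields full regularity), and the same interpolation between $T_1$ and $T_2$ via \cite[Theorem 2.4]{Du} to recover the endpoint $p=2$. The argument is correct and coincides with the paper's proof.
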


Proof of Theorem \ref{maintheorap}follows at once from Theorem
\ref{theorem pom 2}, (\ref{eq23b}), (\ref{estimate_ADN1}),
(\ref{estimate_ADN2}) and compactness of $\overline{\om}$.


\section*{Acknowledgement}
{The research was supported by the Grant Agency of the Czech Republic,  grant No. 13-18652S (author one)
and grant No. 13-00522S (author two).
}

\vspace{\baselineskip}



\begin{thebibliography}{00}


\bibitem{AgDoNi}
{{\sc Agmon S., Douglis A., Nirenberg L.:} Estimates near the
boundary for solutions of elliptic partial differential equations
satisfying general boundary conditions. II. Communications on Pure
and Applied Mathematics, Volume 17, Issue 1, 1964, pp. 35-92.}

\bibitem{BeKuc}
{{\sc Bene\v s M., Ku\v cera P.:} Non-steady Navier--Stokes
equations with homogeneous mixed boundary conditions and arbitrarily
large initial condition. Carpathian Journal of Mathematics. No. 1-2,
Vol. 23, 2007, 32-40.}

\bibitem{Bocev}
{{\sc Bochev P.B., Gunzburger M.D.:} Least-Squares Finite Element
Methods,  Applied Mathematical Sciences 166, 2009.}

\bibitem{Ciarlet}
{{\sc Ciarlet P.G.:} Mathematical Elasticity, Vol. I. North-Holland,
Amsterdam, 1988.}

\bibitem{DeKra}
{{\sc Deuring P., Kra\v cmar S.:} Exterior stationary Navier--Stokes
flows in 3D with non-zero velocity at infinity: approximation by
flows in bounded domains. Math. Nachr. 269/270 (2004), 86--115.}

\bibitem{DeKra2}
{{\sc  Deuring P., Kra\v cmar S.:} Artificial boundary conditions
for the Oseen system in 3D exterior domains. Analysis (Munich) 20
(2000), no. 1, 65--90.}

\bibitem{Du}
{{\sc Duoandikoetxea J.:} Fourier Analysis.
Graduate Studies in Mathematics, 29. AMS, Providence, RI, 2001.}

\bibitem{Feistauer}{{\sc Feistauer M.:} Mathematical Methods in Fluid
Mechanics, The Pitman Monographs and Surveys in Pure and Applied
Mathematics 67, Longman Scientific and Technical Series, Harlow,
1993.}

\bibitem{Ga}
{{\sc Galdi G.P.:} An Introduction to the Mathematical Theory on the
Navier-Stokes Equations. Volume I. Springer, 1994.}

\bibitem{GaRaRoTu}
{{\sc Galdi G.P., Rannacher R., Robertson A.M., Turek S.:}
Hemodynamical Flows. Modeling, Analysis and Simulation. Oberwolfach
Seminars, Vol. 37. Birkhäuser, 2008.}

\bibitem{Glo}
{{\sc Glowinski R.:} Numerical Methods for Nonlinear Variational Problems. Springer, Berlin–Heidelberg–
Tokio–New York, 1984.}

\bibitem{Kon} {{\sc Kondrat'ev V.A.:} Boundary value
problems for elliptic equations on domains with conical or angular
points, Trudy Moskov. Mat. Obshch. 16 (1967), Russian.}

\bibitem{KonOle} {{\sc Kondrat'ev V.A., Oleinik O.A.:}
Boundary-value problems for partial differential equations in
non-smooth domains, Uspekhi Mat. Nauk 38:2 (1983), 3-76.}

\bibitem{KozMazRoss}{{\sc  Kozlov V.A., Maz´ya V.G., Rossmann J.:}
Elliptic Boundary Value Problems with Point Singularities, American
Mathematical Society, 1997.}

\bibitem{KozMazRoss2001}{{\sc  Kozlov V.A., Maz´ya V.G., Rossmann J.:}
Spectral Problems Associated with Corner Singularities of Solutions
to Elliptic Equations, Mathematical Surveys and Monographs, 85,
American Mathematical Society, Providence, RI, 2001.}

\bibitem{KraNeu1}
{{\sc Kra\v cmar S., Neustupa J.:} Global existence of weak
solutions of a nonsteady variational inequalities of the
Navier--Stokes type with mixed boundary conditions. Proc. of the
conference ISNA'92, August-September 1992, Part III, Publ. Centre of
the Charles Univ., Prague, 156-157.}

\bibitem{KraNeu2}
{{\sc Kra\v cmar S., Neustupa J.:} Modelling of flows of a viscous
incompressible fluid through a channel by means of variational
inequalities. ZAMM 74, 1994, 6, 637-639.}

\bibitem{KraNeu5}
{{\sc Kra\v cmar S., Neustupa J.:} A weak solvability of a steady
variational inequality of the Navier--Stokes type with mixed
boundary conditions. Proceedings of the Third World Congress of
Nonlinear Analysts, Part 6 (Catania, 2000). Nonlinear Anal. 47
(2001), no. 6, 4169--4180.}

\bibitem{Kuc2009}
{{\sc Ku\v cera P.:} Basic properties of solution of the non-steady
Navier-Stokes equations with mixed boundary conditions in a bounded
domain, Annali dell’ Universita di Ferrara, \textbf{55}(2009),
289-308.}

\bibitem{KuSkpj}
{{\sc Ku\v cera P., Skal\' ak Z.:} Solutions to the Navier--Stokes
Equations with Mixed Boundary Conditions. Acta Applicandae
Mathematicae, 275-288, Kluver Academic Publishers, 1998, vol. 54,
no. 3.}

\bibitem{KufSan}{{\sc  Kufner A., S\"{a}ndig A.-M.:}
Some Aplications of Weighted Sobolev Spaces, Teubner-Texte zur
Mathematik, Band 100, Leipzig 1987.}

\bibitem{KuJoFu}
{{\sc Kufner A., John O., Fu\v c\' ik S.:} Function spaces.
Academia, Prague, 1977.}

\bibitem{MarklSan}
{{\sc M\"{a}rkl P., S\"{a}ndig A.-M.:} Singularities of the Stokes
System in Polygons, Bericht 2008/009 des Instituts f\"{u}r
Angewandte Analysis und Numerische Simulation der Universit\"{a}t
Stuttgart, 2008.}

\bibitem{OrltSan}
{{\sc Orlt M., S\"{a}ndig A.-M.:} Regularity of Viscous
Navier--Stokes Flows in Nonsmooth Domains, Boundary Value Problems
and Integral Equations in Nonsmooth Domains, Lecture Notes in Pure
and Applied Mathematics, \textbf{167} (1993), pp. 185-201.}

\bibitem{Ran}
{{\sc Rannacher, R.:} Numerical analysis of the Navier–Stokes equations. Appl. Math. 38, 361–
380 (1993)}

\bibitem{Tay}
{{\sc Taylor A. E.:} Introduction to functional analysis. Wiley,
1958.}

\bibitem{Te}
{{\sc Temam R.:} Navier--Stokes Equations, theory and numerical
analysis, North-Holland Publishinged edition, Company, Amsterodam,
New York, Oxford. Revis (1979).}

\end{thebibliography}
\end{document}